\renewcommand*\l@section{\@dottedtocline{1}{1.5em}{2.3em}}
\theoremstyle{plain}
\newtheorem{theorem}{Theorem}
\newtheorem{proposition}[theorem]{Proposition}
\newtheorem{lemma}[theorem]{Lemma}
\newtheorem{corollary}[theorem]{Corollary}
\theoremstyle{definition}
\newtheorem{definition}{Definition}
\newtheoremstyle{myrem}
 {3pt}
 {3pt}
 {\normalsize}
 { }
 {\itshape}
 {:}
 { }
 {}
 \theoremstyle{myrem}
 \newtheorem{remark}{Remark}
 \appto\remark{\leftskip\parindent}
 \appto\remark{\rightskip\parindent}
\numberwithin{equation}{section}
\numberwithin{theorem}{section}
\begin{document}

\begin{center}
{\Large {\textbf {
 Simplicial-like Identities for  The  Paths  and  The Regular Paths on Discrete Sets
}}}
 \vspace{0.58cm}\\

Shiquan Ren


\bigskip

\begin{quote}
\begin{abstract}
Simplicial identities play an important  and fundamental role   in  simplicial homotopy theory.  On the other hand,  the study of the paths  and the regular paths  on discrete sets   is  the foundation for the path-homology theory of digraphs. In this paper,  by investigating some weighted face maps and weighted co-face maps on the space of the paths as well as the space of the regular paths,  we prove some simplicial-like identities for  the   paths  and the regular paths on discrete sets. 
\end{abstract}

\bigskip

{ {\bf 2010 Mathematics Subject Classification.}  	Primary  55U10,  	55U15,  Secondary  	53A45,  08A50. 
}

{{\bf Keywords and Phrases.}   simplicial identities,  simplicial sets,   discrete calculus,  sequence analysis. }

\end{quote}

\end{center}

\bigskip

\section{Introduction}

Simplicial identities are important tools  in  simplicial homotopy theory.   So far,  topologists applied   simplicial   methods in algebraic topology  systematically  and developed the simplicial homotopy theory.     For instances,     Edward  B. Curtis  \cite{curtis}  in 1971,    Goerss, Paul G. and  Jardine, John \cite{go}  in 2009,      Jie Wu \cite{jiewu2}  in 2010,  etc.  Moreover,  essential applications of simplicial homotopy theory  have been found in various areas.  For example,  F. R. Cohen  and J. Wu \cite{cohen}  in braids,   Fengchun  Lei,  Fengling Li,  Jie Wu \cite{jiewu1}  in framed links,   Fedor Pavutnitskiy  and Jie Wu \cite{wu5}  in Adams spectral sequence,  etc.

In 1990s,   A. Dimakis  and   F. M\"{u}ller-Hoissen  \cite{d1,d2,d3}   initiated the study of  discrete differential calculus   and discrete Riemannian geometry  with  a motivation in  theoretical physics.  During 2010s,   based on the study of A. Dimakis  and   F. M\"{u}ller-Hoissen  \cite{d1,d2,d3},   Alexander Grigor'yan,  Yong  Lin  and   Shing-Tung Yau  \cite{lin1},   Alexander Grigor'yan,  Yong  Lin, Yuri Muranov and  Shing-Tung Yau \cite{lin2,lin3,lin4}  and   Alexander Grigor'yan,   Yuri Muranov and  Shing-Tung Yau \cite{lin5,lin6}  developed a path-homology theory of  digraphs.   The  theoretical foundation for the path-homology in \cite{lin1} - \cite{lin6}  relies on  the theory of the  paths  and the  regular paths  on discrete sets.

In this paper,  we study some weighted face maps and some  weighted co-face maps for the paths and the regular paths on discrete sets.  We prove some simplicial-like identities for  the   paths  and the regular paths on discrete sets.  The main results of this paper are Theorem~\ref{pr-2.3.2}  and Theorem~\ref{pr-3.1.1}.

  As by-products,    we derive some weighted boundary operators from the weighted face maps and derive some weighted co-boundary operators from the weighted co-face maps.  We  prove the anti-commutative rule and the Newton-Leibniz rule for the weighted boundary operators and the weighted co-boundary operators of the  paths on   discrete sets, in Subsection~\ref{ss-2.3};  we calculate the anti-commutators   for the  weighted boundary operators and the weighted co-boundary operators of  the  regular paths on   discrete sets,  in Subsection~\ref{ss-3.2}. 

\smallskip

\section{Simplicial-Like Identities for Paths on Discrete Sets}

Let  $V$  be  a   finite set whose elements are called {\it vertices}.  Let  $n\geq 0$  be a non-negative  integer.   An {\it elementary $n$-path}  on $V$  is an ordered  sequence   $v_0v_1\ldots v_n$  where $v_0,v_1,\ldots, v_n$ are  vertices in $V$  (cf. \cite[Definition~2.1]{lin2}).  Here  for any $0\leq i<j\leq n$,  the vertices $v_i$  and $v_j$ are not assumed to be distinct.   Let $\mathbb{R}$  be the real numbers.  A linear combination 
\begin{eqnarray*}
\sum_{v_0,v_1,\ldots, v_n\in V}  r_{v_0v_1\ldots v_n}  v_0v_1\ldots v_n,~~~r_{v_0v_1\ldots v_n} \in\mathbb{R},   
\end{eqnarray*}
of elementary $n$-paths on $V$  is called an {\it $n$-path}  on $V$  (cf. \cite[Definition~2.2]{lin2}).  Let  $\Lambda_n(V)$  be the vector space consisting of all the $n$-paths on $V$ (cf. \cite[Definition~2.2]{lin2}).  Note that for  any $n\geq 0$,  the space $\Lambda_n(V)$  is  of   dimension $(\#V)^{n+1}$.   In particular, $\Lambda_0(V)$  is the vector space spanned by all the vertices in $V$   thus is of dimension $\# V$.      
We consider the graded vector space
\begin{eqnarray*}
\Lambda_*(V)=\bigoplus_{n=0}^\infty\Lambda_n(V)
\end{eqnarray*}
with the canonical  addition $+$ and the  canonical (real) scalar multiplication.  
Let $n$ and  $m$  be two non-negative integers.  We take an    $n$-path 
\begin{eqnarray}\label{eq-2.0.1}
\xi=\sum_{v_0,v_1,\ldots,v_n\in V} r_{v_0v_1\ldots v_n}  v_0v_1\ldots v_n
\end{eqnarray}
in $\Lambda_n(V)$ and take an $m$-path  
\begin{eqnarray}\label{eq-2.0.2}
\eta=\sum_{u_0,u_1,\ldots,u_m\in V} t_{u_0u_1\ldots u_n}  u_0u_1\ldots u_m 
\end{eqnarray}
in $\Lambda_m(V)$.    We define their {\it join}   $\xi * \eta$   to be an $(n+m+1)$-path in $\Lambda_{n+m+1}(V)$  by letting  (cf. \cite[Subsection~2.2, Join of paths]{lin2})
\begin{eqnarray}\label{eq-2.0.3}
\xi*\eta=\sum_{v_0,v_1,\ldots,v_n\in V;\atop u_0,u_1,\ldots, u_m\in V}  r_{v_0v_1\ldots v_n}  t_{u_0u_1\ldots u_m} v_0v_1\ldots v_n u_0u_1\ldots u_m. 
\end{eqnarray}
We extend $*$  blinearly over $\Lambda_*(V)$.  
It  is direct to verify the following laws: 
\begin{enumerate}[(i).]
\item
bilinear law:  $(\lambda\xi)*(\mu\eta)=(\lambda\mu) (\xi*\eta)$ for any $\lambda,\mu\in\mathbb{R}$  and   any $\xi,\eta\in \Lambda_*(V)$; 
\item 
associative law:     $(\xi*\eta)*\theta=\xi* (\eta*\theta)$ for   any $\xi, \eta,\theta\in \Lambda_*(V)$; 
\item
 distributive law:  $\xi*(\eta+\theta)=\xi*\eta+ \xi* \theta$  and  $(\eta+\theta)*\xi=\eta* \xi+ \theta*\xi$   for any  $\xi, \eta,\theta\in \Lambda_*(V)$. 
\end{enumerate}
We  give the next definition.    
\begin{definition}
We call the graded vector space $\Lambda_*(V)$ equipped with the  join  $*$ the {\it path algebra}  on $V$ with coefficients in the real numbers. 
\end{definition}

\smallskip


\subsection{The Partial Derivatives and the Partial Differentiations  for Paths}\label{ss2.2}

Let  $v\in V$.  In this subsection,  we define the partial derivative   as well as  the partial differentiation on $\Lambda_*(V)$, with respect to $v$.  

\smallskip

\begin{definition}\label{def-2.222}
 For each $n\geq 0$,  we    define the {\it partial derivative with respect to $v$} to be   a linear map 
\begin{eqnarray*}
\vec{\frac{\partial}{\partial v}}:~~~ \Lambda_n(V)\longrightarrow  \prod_{n+1}\Lambda_{n-1}(V)
\end{eqnarray*}
by 
\begin{eqnarray}\label{eq-apr.8}
\vec{\frac{\partial}{\partial v}}=\Big(\frac{\partial_0}{\partial v},   \frac{\partial_1}{\partial v},\ldots, \frac{\partial_n}{\partial v}\Big). 
\end{eqnarray}
Here in (\ref{eq-apr.8}),  for each  $0\leq i\leq n$,  the $i$-th coordinate of (\ref{eq-apr.8})  is a linear map
\begin{eqnarray*}
 \frac{\partial_i}{\partial v}:~~\Lambda_n(V)\longrightarrow   \Lambda_{n-1}(V)  
 \end{eqnarray*}
given by 
\begin{eqnarray}\label{eq-2.4ax}
\frac{\partial_i}{\partial v}(v_0v_1\ldots v_n)=
(-1)^i\delta(v,v_i)  v_0\ldots\widehat{v_i} \ldots v_n    
\end{eqnarray} 
for any elementary $n$-path $v_0v_1\ldots v_n$ on $V$.    
\end{definition}

\begin{remark}
We give a remark on the notation in (\ref{eq-2.4ax}).  For any $v,u\in V$,  the notation $\delta(v,u)$  is defined to be $1$  if $v=u$  and is  defined to be $0$  if  $v\neq u$.  
\end{remark}


\begin{definition}\label{def-2.223}
For each  $n\geq 0$,  we     define the {\it partial differentiation with respect to $v$} to be  a linear map 
\begin{eqnarray*}
\vec{d}v:~~~  \Lambda_n(V)\longrightarrow\prod_{n+2} \Lambda_{n+1}(V)
\end{eqnarray*}
by 
\begin{eqnarray}\label{eq-apr.9}
\vec{d}v=(d_0 v,  d_1 v,\ldots, d_{n+1} v ). 
\end{eqnarray}  
Here in (\ref{eq-apr.9}),  for each  $0\leq i\leq n+1$,  the $i$-th coordinate of (\ref{eq-apr.9})  is a linear map
\begin{eqnarray*}
d_i v:~~\Lambda_n(V)\longrightarrow   \Lambda_{n+1}(V)  
 \end{eqnarray*}
given by 
\begin{eqnarray}\label{eq-2.2.1}
d_i v(v_0v_1\ldots v_n)=
(-1)^i v_0\ldots v_{i-1} v v_i\ldots v_n
\end{eqnarray} 
 for each elementary $n$-path $v_0v_1\ldots v_n$  on $V$.  
 \end{definition}
 
 \begin{remark} 
 We  give a remark for Definition~\ref{def-2.223}.  
 Letting $i=0$  in (\ref{eq-2.2.1}),  we have 
 \begin{eqnarray}\label{eq-2.2.2}
d_0 v(v_0v_1\ldots v_n)=
(-1)^0 v v_0\ldots v_n; 
\end{eqnarray}   
  letting $i=n+1$  in (\ref{eq-2.2.1}),  we have 
 \begin{eqnarray}\label{eq-2.2.3}
d_0 v(v_0v_1\ldots v_n)=
(-1)^{n+1}v_0\ldots v_n v.  
\end{eqnarray}   
 Hence for each elementary $n$-path $v_0v_1\ldots v_n$  on $V$,  by (\ref{eq-2.2.1}), (\ref{eq-2.2.2})  and (\ref{eq-2.2.3}),   its  image under $\vec{d}v$  is a vector 
\begin{eqnarray*}
\vec{d}v(v_0v_1\ldots v_n)&=&\Big((-1)^0 v v_0\ldots v_n,  (-1)^1 v_0 v v_1\ldots v_n,  
\ldots, \\
&&(-1)^i v_0\ldots v_{i-1} v v_i\ldots v_n,\ldots, (-1)^{n+1}v_0\ldots v_n v\Big). 
\end{eqnarray*} 
\end{remark}

By Definition~\ref{def-2.222}  and  Definition~\ref{def-2.222},   we  have the next three lemmas.  

\begin{lemma}\label{le-2.2.1}
Let $v,u\in V$. Let $n\geq 0$.  Then for any possible $i$ and $j$ with $i<j$, we have  
\begin{eqnarray*}
\frac{\partial_i}{\partial u}\circ \frac{\partial_j}{\partial v}=-\frac{\partial_{j-1}}{\partial v}\circ\frac{\partial_i}{\partial u}.
\end{eqnarray*} 
\end{lemma}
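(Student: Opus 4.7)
The strategy is a direct computation on elementary paths, followed by extension by linearity. Since both sides of the claimed identity are linear maps $\Lambda_n(V) \to \Lambda_{n-2}(V)$, it suffices to check the identity on an arbitrary elementary $n$-path $v_0 v_1 \ldots v_n$. The key conceptual point is tracking how indices shift when a vertex is deleted.

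First I would compute the left-hand side. Applying (\ref{eq-2.4ax}) gives
\begin{eqnarray*}
\frac{\partial_j}{\partial v}(v_0 v_1 \ldots v_n) = (-1)^j \delta(v, v_j)\, v_0 \ldots \widehat{v_j} \ldots v_n,
\end{eqnarray*}
which is an elementary $(n-1)$-path in which the original vertex $v_i$ (since $i < j$) still sits in position $i$. Applying $\frac{\partial_i}{\partial u}$ then picks out $v_i$ from position $i$ and produces
\begin{eqnarray*}
\frac{\partial_i}{\partial u} \circ \frac{\partial_j}{\partial v}(v_0 \ldots v_n) = (-1)^{i+j}\, \delta(u, v_i)\, \delta(v, v_j)\, v_0 \ldots \widehat{v_i} \ldots \widehat{v_j} \ldots v_n.
\end{eqnarray*}

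Next I would compute the right-hand side. Applying $\frac{\partial_i}{\partial u}$ first deletes position $i$ and leaves an $(n-1)$-path in which the original vertex $v_j$ has been shifted one place to the left, now occupying position $j-1$. This is the crucial bookkeeping step and the only place where one could make a mistake; the inequality $i < j$ is exactly what ensures this index shift. Then applying $\frac{\partial_{j-1}}{\partial v}$ yields
\begin{eqnarray*}
\frac{\partial_{j-1}}{\partial v} \circ \frac{\partial_i}{\partial u}(v_0 \ldots v_n) = (-1)^{i+j-1}\, \delta(u, v_i)\, \delta(v, v_j)\, v_0 \ldots \widehat{v_i} \ldots \widehat{v_j} \ldots v_n.
\end{eqnarray*}

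Finally, comparing the two expressions, the resulting basis $(n-2)$-paths and the Kronecker delta factors agree, while the signs differ by exactly $(-1)^{i+j}$ versus $(-1)^{i+j-1}$, which means the two compositions are negatives of each other. This gives the desired identity on every elementary $n$-path, and extending by linearity over $\Lambda_n(V)$ completes the proof. There is no real obstacle here beyond being careful with the index shift after the first face map is applied; this is the standard simplicial-identity computation adapted from deletion of coordinates in $\Delta^n$ to deletion of entries in a path, with the $\delta(v, \cdot)$ factors inert under the composition.
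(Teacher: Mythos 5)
Your proof is correct and follows essentially the same route as the paper: a direct computation of both compositions on an arbitrary elementary $n$-path, with the sign discrepancy $(-1)^{i+j}$ versus $(-1)^{i+j-1}$ arising from the index shift of $v_j$ to position $j-1$ after deleting $v_i$, followed by extension by linearity. The paper merely organizes the same calculation slightly differently (computing $\frac{\partial_i}{\partial u}\circ\frac{\partial_j}{\partial v}$ in both index regimes before specializing to $i<j$), but the substance is identical.
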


\begin{proof}
Let  $v,u\in V$ and $n\geq 0$.  For any $0\leq j\leq n$  and  any $0\leq i\leq n-1$,  it follows from a direct calculation that  
\begin{eqnarray*}
\frac{\partial_i}{\partial u}\circ \frac{\partial_j}{\partial v}(v_0v_1\ldots v_n)
&=&\frac{\partial_i}{\partial u}\Big((-1)^j\delta(v,v_j)  v_0\ldots\widehat{v_j} \ldots v_n  
\Big)\\
 &=&(-1)^j\delta(v,v_j) \frac{\partial_i}{\partial u} (v_0\ldots\widehat{v_j} \ldots v_n)\\
 &=&  
 \begin{cases}
 (-1)^{i+j}\delta(v,v_j)\delta(u,v_i)  v_0\ldots\widehat{v_i}\ldots \widehat{v_j} \ldots v_n,  & 0\leq i\leq j-1;\\
 (-1)^{i+j}\delta(v,v_j)\delta(u,v_{i+1})  v_0\ldots\widehat{v_j}\ldots \widehat{v_{i+1}} \ldots v_n, &j\leq i\leq n-1.
  \end{cases}
 \end{eqnarray*}
Exchanging $u$ and $v$,  we have 
 \begin{eqnarray*}
\frac{\partial_i}{\partial v}\circ \frac{\partial_j}{\partial u}(v_0v_1\ldots v_n)
 &=&  
 \begin{cases}
 (-1)^{i+j}\delta(u,v_j)\delta(v,v_i)  v_0\ldots\widehat{v_i}\ldots \widehat{v_j} \ldots v_n,  & 0\leq i\leq j-1;\\
 (-1)^{i+j}\delta(u,v_j)\delta(v,v_{i+1})  v_0\ldots\widehat{v_j}\ldots \widehat{v_{i+1}} \ldots v_n, &j\leq i\leq n-1. 
  \end{cases}
 \end{eqnarray*}
Now we suppose $i<j$.  It follows from the above  two equations that for any elementary $n$-path $v_0v_1\ldots v_n$ on $V$,  we  have 
\begin{eqnarray*}
\frac{\partial_i}{\partial u}\circ \frac{\partial_j}{\partial v}(v_0v_1\ldots v_n)=-\frac{\partial_{j-1}}{\partial v}\circ\frac{\partial_i}{\partial u}(v_0v_1\ldots v_n). 
\end{eqnarray*}
By the linear property  of $\frac{\partial_i}{\partial u}\circ \frac{\partial_j}{\partial v}$  and  $\frac{\partial_{j-1}}{\partial v}\circ\frac{\partial_i}{\partial u}$,  the lemma follows.
\end{proof}

\begin{lemma}\label{le-2.2.2}
Let $v,u\in V$. Let $n\geq 0$.  Then  for any possible $i$ and $j$  we have  
\begin{eqnarray*}
\frac{\partial_i}{\partial u}\circ d_j v = \begin{cases}
- d_{j-1} v \circ\dfrac{\partial_i}{\partial u},  & i<j;\\
\delta(u,v)  {\rm~ id},   & i=j; \\
- d_j v \circ \dfrac{\partial_{i-1}}{\partial u},  &  i>j. 
\end{cases}
\end{eqnarray*}
\end{lemma}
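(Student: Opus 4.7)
The plan is to mirror the direct-computation strategy of Lemma~\ref{le-2.2.1}: evaluate both sides on an arbitrary elementary $n$-path $v_0v_1\ldots v_n$ and extend by linearity. Since the behaviour of $\partial_i/\partial u$ depends on which vertex sits in position $i$ of the $(n+1)$-path produced by $d_jv$, I would split the argument into the three cases $i<j$, $i=j$, $i>j$ listed in the statement.

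The key bookkeeping device is to rewrite
$d_jv(v_0v_1\ldots v_n)=(-1)^j v_0\ldots v_{j-1}vv_j\ldots v_n$
as an elementary $(n+1)$-path $w_0w_1\ldots w_{n+1}$, where $w_k=v_k$ for $k<j$, $w_j=v$, and $w_k=v_{k-1}$ for $k>j$. Applying $\partial_i/\partial u$ then extracts the factor $(-1)^i\delta(u,w_i)$ and deletes $w_i$. In the case $i<j$ one has $w_i=v_i$ and obtains the path $v_0\ldots\widehat{v_i}\ldots v_{j-1}vv_j\ldots v_n$ with total sign $(-1)^{i+j}$; I would compare this with $-d_{j-1}v\circ\partial_i/\partial u$, whose sign is $-(-1)^i(-1)^{j-1}=(-1)^{i+j}$ and whose vertex sequence agrees because inserting $v$ at position $j-1$ of the shortened path lands $v$ between $v_{j-1}$ and $v_j$. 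In the case $i=j$ the deleted vertex is the newly inserted $v$, so $\delta(u,w_j)=\delta(u,v)$, the sequence returns to $v_0v_1\ldots v_n$, and the total sign is $(-1)^{2j}=1$, yielding $\delta(u,v)\,\mathrm{id}$. The case $i>j$ is symmetric to the first, with $w_i=v_{i-1}$ and the path $v_0\ldots v_{j-1}vv_j\ldots\widehat{v_{i-1}}\ldots v_n$ appearing on both sides.

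The main obstacle is the index shift: inserting $v$ at position $j$ pushes every subsequent original vertex one slot to the right, which is exactly why the face map on the right-hand side switches from $d_{j-1}v$ (when $i<j$) to $d_jv$ (when $i>j$), and why the hat on the right-hand side falls on $v_i$ versus $v_{i-1}$. Once this shift is tracked cleanly via the auxiliary sequence $w_k$, each case reduces to a sign comparison between $(-1)^{i+j}$ and $(-1)^{i+j-1}$, which accounts precisely for the $-1$ factor in the anti-commutation relations, while the collision case $i=j$ forces a double sign and a Kronecker delta rather than an anti-commutation.
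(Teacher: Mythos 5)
Your proposal is correct and follows essentially the same route as the paper: evaluate both sides on an elementary $n$-path, split into the cases $i<j$, $i=j$, $i>j$, track the index shift caused by inserting $v$ at position $j$, and conclude by linearity. Your explicit reindexing $w_k$ even gets the Kronecker factor in the case $i>j$ right as $\delta(u,v_{i-1})$, where the paper's displayed computation carries a harmless typo ($\delta(u,v_i)$) while still arriving at the correct identity.
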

\begin{proof}
Let  $v,u\in V$ and $n\geq 0$.  Let $0\leq i, j\leq n+1$. Take an elementary $n$-path $v_0v_1\ldots v_n$ on $V$.  We consider   three cases:

{\sc Case~1}.  $i<j$.  Then 
\begin{eqnarray*}
\frac{\partial_i}{\partial u}\circ d_j v(v_0v_1\ldots v_n)&=& (-1)^{i+j} \delta( u,v_i) v_0\ldots \widehat{v_i} \ldots v_{j-1} v v_j \ldots v_n\\
&=& -(-1)^{i+(j-1)} \delta( u,v_i) v_0\ldots \widehat{v_i} \ldots v_{j-1} v v_j \ldots v_n\\
&=&- d_{j-1} v \circ\dfrac{\partial_i}{\partial u}(v_0v_1\ldots v_n).  
\end{eqnarray*}
Thus by the linear property  of $\frac{\partial_i}{\partial u}\circ d_j v $  and $d_{j-1} v \circ\frac{\partial_i}{\partial u}$,  we have 
\begin{eqnarray*}
\frac{\partial_i}{\partial u}\circ d_j v=- d_{j-1} v \circ\dfrac{\partial_i}{\partial u}.  
\end{eqnarray*}

{\sc Case~2}.  $i=j$.  Then 
\begin{eqnarray*}
\frac{\partial_i}{\partial u}\circ d_i v(v_0v_1\ldots v_n)&=& (-1)^{i+i} \delta( u,v) v_0\ldots  v_{i-1} \widehat{v} v_i \ldots\ldots v_n\\
&=& \delta( u,v) v_0v_1\ldots v_n.  
\end{eqnarray*}
Thus by the linear property of $\frac{\partial_i}{\partial u}\circ d_j v$,  we have 
\begin{eqnarray*}
\frac{\partial_i}{\partial u}\circ d_j v=\delta(u,v)  {\rm~ id}.  
\end{eqnarray*}

{\sc Case~3}.  $i>j$.  Then 
\begin{eqnarray*}
\frac{\partial_i}{\partial u}\circ d_j v(v_0v_1\ldots v_n)&=& (-1)^{i+j} \delta( u,v_i) v_0\ldots  v_{j-1} v v_j \ldots\widehat{v_{i-1}} \ldots v_n\\
&=& -(-1)^{(i-1)+j} \delta( u,v_i) v_0\ldots v_{j-1} v v_j \ldots \widehat{v_{i-1}} \ldots v_n\\
&=&- d_{j} v \circ\dfrac{\partial_{i-1}}{\partial u}(v_0v_1\ldots v_n).  
\end{eqnarray*}
Thus by the linear property  of $\frac{\partial_i}{\partial u}\circ d_j v$ and  $d_{j} v \circ\frac{\partial_{i-1}}{\partial u}$,  we have 
\begin{eqnarray*}
\frac{\partial_i}{\partial u}\circ d_j v=- d_{j} v \circ\dfrac{\partial_{i-1}}{\partial u}.  
\end{eqnarray*}

Summarizing all the three cases,  we obtain the lemma.  
\end{proof}

\begin{lemma}\label{le-2.2.3}
Let $v,u\in V$. Let $n\geq 0$.  Then  for any possible $i$ and $j$ with $i\leq  j$,  we have  
\begin{eqnarray*}
d_i u \circ d_j v= -d_{j+1}  v \circ  d_i u.  
\end{eqnarray*}
\end{lemma}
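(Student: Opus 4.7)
The plan is to verify the identity by direct computation on an arbitrary elementary $n$-path $v_0v_1\ldots v_n$ and then extend to all of $\Lambda_n(V)$ by the linearity of the maps $d_i u$ and $d_{j+1} v$, exactly as in the proofs of Lemma~\ref{le-2.2.1} and Lemma~\ref{le-2.2.2}.

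For the left-hand side, I would first apply (\ref{eq-2.2.1}) to obtain $d_j v(v_0\ldots v_n) = (-1)^j v_0\ldots v_{j-1} v v_j \ldots v_n$, a sequence of length $n+2$ in which $v$ occupies slot $j$. Since $i\leq j$, applying $d_i u$ then inserts $u$ at slot $i$, which lies weakly to the left of the previously inserted $v$, pushing that $v$ into slot $j+1$ in the new word. Collecting the two sign factors $(-1)^i$ and $(-1)^j$ gives $(-1)^{i+j}\, v_0\ldots v_{i-1} u v_i\ldots v_{j-1} v v_j \ldots v_n$.

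For the right-hand side, I would compute $d_i u(v_0\ldots v_n) = (-1)^i v_0\ldots v_{i-1} u v_i\ldots v_n$, a sequence of length $n+2$ in which the slot $k$ for $k\geq i+1$ is occupied by the original $v_{k-1}$. Because $i\leq j$, we have $j+1\geq i+1$, so applying $d_{j+1} v$ places $v$ strictly to the right of $u$, specifically between the original $v_{j-1}$ and $v_j$. The resulting elementary $(n+2)$-path is word-for-word the one produced on the left, and collecting the signs gives $-(-1)^{i+(j+1)} = (-1)^{i+j}$, which matches. Linearity then promotes the identity from elementary paths to all of $\Lambda_n(V)$.

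The main bookkeeping obstacle is the index shift after the first insertion: once a letter is inserted at slot $i$ (respectively $j$) in the original word, every original letter from that slot onward moves one position to the right, so the second insertion index must be interpreted in the enlarged sequence. The choice of $j+1$ rather than $j$ on the right-hand side is precisely what compensates for this shift and makes the two constructions yield the same underlying sequence; once that is tracked carefully, the case analysis is a single case (in contrast to Lemmas~\ref{le-2.2.1} and~\ref{le-2.2.2}) and the sign identity $(-1)^{i+j} = -(-1)^{i+j+1}$ closes the proof immediately.
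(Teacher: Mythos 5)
Your proof is correct and follows essentially the same route as the paper's: a direct evaluation of both composites on an elementary $n$-path via (\ref{eq-2.2.1}), a check that the resulting words and signs agree when $i\leq j$, and an appeal to linearity. The only difference is cosmetic — the paper tabulates $d_iu\circ d_jv$ in all four index regimes and obtains the reversed composite by exchanging $u$ and $v$, whereas you compute just the single case needed, with the same index-shift bookkeeping.
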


\begin{proof}
Let  $v,u\in V$ and $n\geq 0$. Take an elementary $n$-path $v_0v_1\ldots v_n$ on $V$.  Then for any possible $i$ and $j$ we have 
\begin{eqnarray*}
d_i u \circ d_j v (v_0v_1\ldots v_n)&=& (-1)^j d_i u(v_0\ldots v_{j-1} v v_j\ldots v_n)\\
&=&\begin{cases}
(-1)^{i+j}v_0\ldots v_{i-1} u v_i\ldots  v_{j-1} v v_j\ldots v_n,  &i<j;\\
 v_0\ldots v_{i-1} u   v v_i\ldots v_n,&i=j;\\
  -v_0\ldots v_{i-1} v u v_i\ldots v_n,&i=j+1;\\
(-1)^{i+j}v_0\ldots v_{j-1} v v_j\ldots  v_{i-2} u v_{i-1}\ldots v_n, & i>j+1.
\end{cases}
\end{eqnarray*}
 Exchanging $u$ and $v$  we have 
 \begin{eqnarray*}
d_i v \circ d_j u (v_0v_1\ldots v_n)=\begin{cases}
(-1)^{i+j}v_0\ldots v_{i-1} v v_i\ldots  v_{j-1} u v_j\ldots v_n,  &i<j;\\
 v_0\ldots v_{i-1} v   u  v_i\ldots v_n,&i=j;\\
  -v_0\ldots v_{i-1} u v v_i\ldots v_n,&i=j+1;\\
(-1)^{i+j}v_0\ldots v_{j-1} u v_j\ldots  v_{i-2} v v_{i-1}\ldots v_n, & i>j+1.
\end{cases} 
 \end{eqnarray*}
 Now we suppose $i\leq j$.  Then it follows from the above  two equations  that 
 \begin{eqnarray*}
d_i u \circ d_j v (v_0v_1\ldots v_n)=-d_{j+1}  v \circ  d_i u (v_0v_1\ldots v_n).   
 \end{eqnarray*}
 By the linear property  of $d_i u \circ d_j v$  and $d_{j+1}  v \circ  d_i u$,  the lemma follows.  
\end{proof}

By the end of this subsection,   we summarize Lemma~\ref{le-2.2.1},  Lemma~\ref{le-2.2.2} and Lemma~\ref{le-2.2.3}  in the following list:

 \begin{quote}
 \begin{itemize}
 \item
  $\dfrac{\partial_i}{\partial u}\circ \dfrac{\partial_j}{\partial v}=-\dfrac{\partial_{j-1}}{\partial v}\circ\dfrac{\partial_i}{\partial u}$ for $i<j$;
  \item
$
\dfrac{\partial_i}{\partial u}\circ d_j v = \begin{cases}
- d_{j-1} v \circ\dfrac{\partial_i}{\partial u},  & i<j,\\
\delta(u,v)  {\rm~ id},   & i=j, \\
- d_j v \circ \dfrac{\partial_{i-1}}{\partial u},  &  i>j; 
\end{cases}
$
\item
$d_i u \circ d_j v= -d_{j+1}  v \circ  d_i u$  for $i\leq j$.    
\end{itemize}
 \end{quote}
 
\smallskip

\subsection{The Weighted Face Maps and the Weighted Co-Face Maps  for Paths}\label{ss-2.22}

Let $f:  V\longrightarrow \mathbb{R}$  be an real valued function on $V$ which assigns a real number $f(v)$ to each vertex $v\in V$.   Let $n\geq 0$.  In this subsection,  we define the $f$-weighted face maps as well as  the  $f$-weighted co-face maps  on $\Lambda_*(V)$.  Then we prove some simplicial-like identities. 

\smallskip

\begin{definition}\label{def-2.30}
  For each $0\leq i\leq n$,  we define the {\it $i$-th $f$-weighted face map} to be a linear map 
\begin{eqnarray*}
\sum_{v\in V}  f(v) \frac{\partial_i}{\partial v}:~~~\Lambda_n(V)\longrightarrow \Lambda_{n-1}(V). 
\end{eqnarray*}
For simplicity,  we use the notation 
\begin{eqnarray}\label{eq-2.3.8}
\partial_i^f:=\sum_{v\in V}  f(v) \dfrac{\partial_i}{\partial v}
\end{eqnarray}
for the $i$-th $f$-weighted face map.  
\end{definition}

\begin{definition}\label{def-2.31}
For each $0\leq i\leq n+1$,  we define the {\it $i$-th $f$-weighted co-face map} to be a linear map 
\begin{eqnarray*}
\sum_{v\in V}  f(v)  d_i v:~~~\Lambda_n(V)\longrightarrow \Lambda_{n+1}(V). 
\end{eqnarray*}
For simplicity,  we use the notation 
 \begin{eqnarray}\label{eq-2.3.9}
 d_i^f:=\sum_{v\in V}  f(v) d_i  v
 \end{eqnarray}
 for the $i$-th  $f$-weighted co-face map. 
\end{definition}

 \begin{definition} \label{def-2.32}
 For any real functions $f$ and $g$ on $V$,  their inner product with respect to $V$ is defined by 
 \begin{eqnarray*}
 \langle f,g\rangle =\langle f,g\rangle^V:= \sum_{v\in V}  f(v) g(v).  
 \end{eqnarray*}
 \end{definition}
 
 \begin{definition}
Taking $f=g$ in  Definition~\ref{def-2.32},  for any real function $f$  on $V$,   the  $L^2$-norm of $f$ is defined by 
 \begin{eqnarray*}
 ||f||_2=||f||_2^V:=\sqrt{\langle f,f\rangle }=\sqrt {\sum_{v\in V}  |f(v)|^2}.  
 \end{eqnarray*}  
 \end{definition}

The next proposition gives the explicit formulas for $\partial_i^f$ and $d_i^f$  on  the  elementary $n$-paths. 

\begin{proposition}\label{le-2.3.99}
Let $f$  be a real function  on $V$.  Let  $n\geq 0$ and let $v_0v_1\ldots v_n$  be an  elementary $n$-path   on $V$.  Then for any   $0\leq i\leq n$  we  have 
\begin{eqnarray*}
\partial^f_i (v_0v_1\ldots v_n)=(-1)^i f(v_i) v_0\ldots \widehat{v_i} \ldots v
\end{eqnarray*}
and for any $0\leq j\leq n+1$ we have 
\begin{eqnarray*}
d^f_j (v_0v_1\ldots v_n)=\sum_{v\in V} (-1)^j f(v) v_0\ldots  v_{j-1} v v_j\ldots v_n. 
\end{eqnarray*}
\end{proposition}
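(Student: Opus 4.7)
The proposition is a direct computational unfolding, so my plan is simply to expand the two shorthand notations from Definitions~\ref{def-2.30} and \ref{def-2.31} and apply the pointwise formulas from Definitions~\ref{def-2.222} and \ref{def-2.223}. No ingredient beyond the definitions is required.

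For the first identity, I would start from (\ref{eq-2.3.8}) and compute
\begin{eqnarray*}
\partial_i^f(v_0v_1\ldots v_n)=\sum_{v\in V} f(v)\,\frac{\partial_i}{\partial v}(v_0v_1\ldots v_n),
\end{eqnarray*}
then substitute (\ref{eq-2.4ax}) to get $\sum_{v\in V} f(v)(-1)^i\delta(v,v_i)\,v_0\ldots\widehat{v_i}\ldots v_n$. The Kronecker delta $\delta(v,v_i)$ kills every term except the one with $v=v_i$, collapsing the sum to $(-1)^i f(v_i)\,v_0\ldots\widehat{v_i}\ldots v_n$, which is the claimed formula.

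For the second identity, I would unfold (\ref{eq-2.3.9}) and apply (\ref{eq-2.2.1}):
\begin{eqnarray*}
d_j^f(v_0v_1\ldots v_n)=\sum_{v\in V} f(v)\,d_j v(v_0v_1\ldots v_n)=\sum_{v\in V}(-1)^j f(v)\,v_0\ldots v_{j-1}v v_j\ldots v_n.
\end{eqnarray*}
In contrast to the first case, no collapse occurs here: the inserted vertex $v$ is a free summation index rather than being pinned by a delta, so the sum over $V$ persists in the answer, accounting for the structural asymmetry between the two formulas.

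There is really no obstacle in this proof; the only point worth emphasizing is the contrast between the two computations, namely that a weighted face map selects the coefficient $f(v_i)$ at the vertex being deleted whereas a weighted co-face map distributes $f$ across all possible insertions. I would therefore present the two calculations in parallel, each as a single displayed line, followed by a one-sentence remark explaining why the first collapses while the second does not.
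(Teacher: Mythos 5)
Your proposal is correct and follows exactly the same route as the paper's proof: unfold the definitions (\ref{eq-2.3.8}) and (\ref{eq-2.3.9}), apply the pointwise formulas (\ref{eq-2.4ax}) and (\ref{eq-2.2.1}), and let the Kronecker delta collapse the sum in the face-map case while the co-face sum remains. Nothing is missing.
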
 
 \begin{proof}
 Let $0\leq i\leq n$ and $0\leq j\leq n+1$.  By a straight-forward calculation,  
  \begin{eqnarray*}
   \partial^f_i (v_0v_1\ldots v_n)&=&\sum_{v\in V} f(v)\frac{\partial_i}{\partial v}(v_0v_1\ldots v_n)\\
  &=&\sum_{v\in V}(-1)^i\delta(v,v_i) f(v) v_0\ldots \widehat{v_i} \ldots v\\
  &=& (-1)^i f(v_i) v_0\ldots \widehat{v_i} \ldots v 
  \end{eqnarray*} 
  and 
  \begin{eqnarray*}
  d^f_j(v_0v_1\ldots v_n)&=&  \sum_{v\in V} f(v) d_jv (v_0v_1\ldots v_n)\\
  &=& \sum_{v\in V} (-1)^j f(v) v_0\ldots  v_{j-1} v v_j\ldots v_n. 
  \end{eqnarray*}
  The proposition follows.  
 \end{proof}
 
The next theorem follows with the help of  Subsection~\ref{ss2.2}.

\begin{theorem}[Main Result I:  The simplicial-like identities for paths on discrete sets]
\label{pr-2.3.2}
Let  $f$  and $g$ be  two real functions on $V$.   Then for any $n\geq 0$,  we  have 
\begin{enumerate}[(i).]
\item
$\partial_i^f\circ \partial_j^g=-\partial_{j-1}^g\circ \partial_i^f$ for any $i<j$; 

\item
$
\partial_i ^f\circ d_j^g= \begin{cases}
- d_{j-1}^g \circ\partial_i^f,  & i<j,\\
\langle f,g\rangle  {\rm~ id},   & i=j, \\
- d_j^g \circ \partial_{i-1}^f,  &  i>j; 
\end{cases}
$

\item
$d_i^f \circ  d_j^g= - d_{j+1}^g \circ  d_i^f$  for $i\leq j$.    
\end{enumerate}
\end{theorem}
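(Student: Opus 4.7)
The plan is to derive each of the three identities by unfolding the definitions (\ref{eq-2.3.8}) and (\ref{eq-2.3.9}) of $\partial_i^f$ and $d_j^g$ as weighted linear combinations of the coordinate operators $\frac{\partial_i}{\partial v}$ and $d_i v$, and then applying the three unweighted identities collected at the end of Subsection~\ref{ss2.2}. More precisely, I would expand each composition into a double sum over $V\times V$, commute the two operators using the appropriate case of Lemma~\ref{le-2.2.1}, Lemma~\ref{le-2.2.2} or Lemma~\ref{le-2.2.3}, and then repackage the resulting double sum back as a composition of weighted face/co-face maps. Because all operators involved are linear and the weights $f(v),g(u)\in\mathbb{R}$ are scalars that simply pull outside the compositions, this repackaging is purely a matter of rearranging sums.

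For part (i), writing $\partial_i^f=\sum_{u\in V}f(u)\frac{\partial_i}{\partial u}$ and $\partial_j^g=\sum_{v\in V}g(v)\frac{\partial_j}{\partial v}$ and using Lemma~\ref{le-2.2.1} in every term of the resulting double sum immediately gives
\[
\partial_i^f\circ\partial_j^g
=\sum_{u,v\in V}f(u)g(v)\Big(\tfrac{\partial_i}{\partial u}\circ\tfrac{\partial_j}{\partial v}\Big)
=-\sum_{u,v\in V}f(u)g(v)\Big(\tfrac{\partial_{j-1}}{\partial v}\circ\tfrac{\partial_i}{\partial u}\Big)
=-\partial_{j-1}^g\circ\partial_i^f.
\]
Part (iii) follows by the identical pattern using the third bullet at the end of Subsection~\ref{ss2.2}. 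For part (ii), the cases $i<j$ and $i>j$ are handled in exactly the same way using the first and third branches of Lemma~\ref{le-2.2.2}.

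The one step that requires a separate observation is the diagonal case $i=j$ of part (ii), where the middle branch of Lemma~\ref{le-2.2.2} produces a Kronecker $\delta(u,v)$. Here the double sum collapses to a single sum:
\[
\partial_i^f\circ d_i^g
=\sum_{u,v\in V}f(u)g(v)\,\delta(u,v)\,\mathrm{id}
=\Big(\sum_{v\in V}f(v)g(v)\Big)\mathrm{id}
=\langle f,g\rangle\,\mathrm{id},
\]
where the last equality is the definition of the inner product from Definition~\ref{def-2.32}. This is the only place where the bilinear pairing $\langle f,g\rangle$ naturally enters, and it is what distinguishes the weighted identities from the unweighted ones. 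I expect this to be the main (and really the only) conceptual point of the proof; everything else is a mechanical bilinear extension of the lemmas of Subsection~\ref{ss2.2}.
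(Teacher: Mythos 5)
Your proposal is correct and follows essentially the same route as the paper's own proof: expand $\partial_i^f$ and $d_j^g$ as weighted sums over $V$, apply Lemma~\ref{le-2.2.1}, Lemma~\ref{le-2.2.2} and Lemma~\ref{le-2.2.3} termwise in the resulting double sum, and collapse the diagonal case of (ii) via $\delta(u,v)$ to obtain $\langle f,g\rangle\,\mathrm{id}$. Nothing is missing.
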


\begin{proof}
(i).  Suppose $i<j$.  By a straight-forward calculation and   Lemma~\ref{le-2.2.1},    we  have 
\begin{eqnarray*}
\Big(\sum_{v\in V}  f(v) \dfrac{\partial_i}{\partial v}\Big)\circ \Big(\sum_{v\in V}  g(v) \dfrac{\partial_j}{\partial v}\Big)
&=&\Big(\sum_{u\in V}  f(u) \dfrac{\partial_i}{\partial u}\Big)\circ \Big(\sum_{v\in V}  g(v) \dfrac{\partial_j}{\partial v}\Big) \\
&=&\sum_{u,v\in V} f(u) g(v) \dfrac{\partial_i}{\partial u}\circ \dfrac{\partial_j}{\partial v}\\
&=&-\sum_{u,v\in V} f(u) g(v)\dfrac{\partial_{j-1}}{\partial v}\circ\dfrac{\partial_i}{\partial u}\\
&=&-\Big(\sum_{v\in V}  g(v) \dfrac{\partial_{j-1}}{\partial v}\Big)\circ \Big(\sum_{u\in V}  f(u) \dfrac{\partial_i}{\partial u}\Big)\\
&=&-\Big(\sum_{v\in V}  g(v) \dfrac{\partial_{j-1}}{\partial v}\Big)\circ \Big(\sum_{v\in V}  f(v) \dfrac{\partial_i}{\partial v}\Big). 
\end{eqnarray*}
Thus  we obtain
\begin{eqnarray*}
\Big(\sum_{v\in V}  f(v) \dfrac{\partial_i}{\partial v}\Big)\circ \Big(\sum_{v\in V}  g(v) \dfrac{\partial_j}{\partial v}\Big)=-\Big(\sum_{v\in V}  g(v) \dfrac{\partial_{j-1}}{\partial v}\Big)\circ \Big(\sum_{v\in V}  f(v) \dfrac{\partial_i}{\partial v}\Big).  
\end{eqnarray*}
By  the notations  (\ref{eq-2.3.8})  and (\ref{eq-2.3.9}),  we obtain (i).

(ii). By a straight-forward calculation  Lemma~\ref{le-2.2.2},  we have  
\begin{eqnarray*}
\Big(\sum_{v\in V} f(v)\dfrac{\partial_i}{\partial v}\Big)\circ\Big(\sum_{v\in V} g(v) d_j v\Big) &=&\Big(\sum_{u\in V} f(u)\dfrac{\partial_i}{\partial u}\Big)\circ\Big(\sum_{v\in V} g(v) d_j v\Big) \\
&=&\sum_{u,v\in V}  f(u) g(v) \dfrac{\partial_i}{\partial u}\circ d_j v\\
&=&\begin{cases}
-\sum_{u,v\in V} f(u) g(v)  d_{j-1} v \circ\dfrac{\partial_i}{\partial u},  & i<j,\\
\sum_{u,v\in V} f(u) g(v) \delta(u,v)  {\rm~ id},   & i=j, \\
- \sum_{u,v\in V} f(u) g(v)  d_j v \circ \dfrac{\partial_{i-1}}{\partial u},  &  i>j.  
\end{cases}
\end{eqnarray*}
By a similar calculation in (i),  we  have 
\begin{eqnarray*}
\Big(\sum_{v\in V}f(v) d_{j-1} v \Big)\circ\Big(\sum_{v\in V} g(v) \dfrac{\partial_i}{\partial v}\Big)
&=&\sum_{u,v\in V} f(u) g(v)  d_{j-1} v \circ\dfrac{\partial_i}{\partial u},\\
 \langle f,g\rangle{\rm~id}~=~\Big(\sum_{v\in V} f(v) g(v)\Big)  {\rm~ id}
 &=&\sum_{u,v\in V} f(u) g(v) \delta(u,v)  {\rm~ id}, \\
\Big(\sum_{v\in V} f(v) d_j v\Big) \circ \Big(\sum_{v\in V}  g(v) \dfrac{\partial_{i-1}}{\partial v}\Big)
&=&\sum_{u,v\in V} f(u) g(v)  d_j v \circ \dfrac{\partial_{i-1}}{\partial u}.  
\end{eqnarray*}
Therefore,  it follows from the above  four equations  that 
\begin{eqnarray*}
\Big(\sum_{v\in V} f(v)\dfrac{\partial_i}{\partial v}\Big)\circ\Big(\sum_{v\in V} g(v) d_j v\Big) = \begin{cases}
- \Big(\sum_{v\in V}  f(v)  d_{j-1} v \Big)\circ\Big(\sum_{v\in V}  g(v)  \dfrac{\partial_i}{\partial v}\Big),  & i<j,\\
\Big(\sum_{v\in V} f(v)  g(v)\Big)  {\rm~ id},   & i=j, \\
- \Big(\sum_{v\in V} f(v) d_j v\Big) \circ \Big(\sum_{v\in V}  g(v) \dfrac{\partial_{i-1}}{\partial v}\Big),  &  i>j.  
\end{cases}
\end{eqnarray*}
With the help of  the notations  (\ref{eq-2.3.8})  and (\ref{eq-2.3.9}),  we obtain (ii).

(iii).   Suppose $i\leq j$.  By a straight-forward calculation and   Lemma~\ref{le-2.2.3},    we  have 
\begin{eqnarray*}
\Big(\sum_{v\in V} f(v) d_i v\Big) \circ\Big(\sum_{v\in V}  g(v) d_j v\Big)&=&\sum_{u,v\in V} f(u) g(v)  d_i v \circ d_j u\\
&=&-\sum_{u,v\in V} f(u) g(v) d_{j+1}  v \circ  d_i u\\
&=&-\Big(\sum_{v\in V}  g(v) d_{j+1}  v\Big) \circ  \Big(\sum_{v\in V}  f(v)  d_i v\Big). 
\end{eqnarray*}
Thus we obtain
\begin{eqnarray*}
  \Big(\sum_{v\in V} f(v) d_i v\Big) \circ\Big(\sum_{v\in V}  g(v) d_j v\Big)= -\Big(\sum_{v\in V}  g(v) d_{j+1}  v\Big) \circ  \Big(\sum_{v\in V}  f(v)  d_i v\Big).
  \end{eqnarray*}
By  the notations  (\ref{eq-2.3.8})  and (\ref{eq-2.3.9}),  we obtain (iii).  
\end{proof}

Taking $f=g$  and $i=j$,   the next corollary  follows from Theorem~\ref{pr-2.3.2} ~(ii). 

\begin{corollary} For any real function $f$  on $V$ and any $0\leq i\leq n$,  we  have 
\begin{eqnarray*}
\partial_i^f\circ  d_i^f= ||f||_2^2 {\rm~id}.  
\end{eqnarray*} 
\qed
\end{corollary}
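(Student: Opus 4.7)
The plan is to derive the corollary as a direct specialization of Theorem~\ref{pr-2.3.2}~(ii). That theorem gives three cases for $\partial_i^f \circ d_j^g$ depending on the relative order of $i$ and $j$, and in particular the middle case states that $\partial_i^f \circ d_i^g = \langle f, g\rangle \, \mathrm{id}$ when $i=j$.

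The steps are as follows. First, I would set $g = f$ and $j = i$ in Theorem~\ref{pr-2.3.2}~(ii), which immediately yields
\begin{eqnarray*}
\partial_i^f \circ d_i^f = \langle f, f\rangle \, \mathrm{id}.
\end{eqnarray*}
Second, I would invoke the definition of the $L^2$-norm, which gives $\langle f, f\rangle = \|f\|_2^2 = \sum_{v \in V} |f(v)|^2$. Substituting this into the previous identity yields the corollary.

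There is no real obstacle here since the work has already been done in proving Theorem~\ref{pr-2.3.2}; the corollary is essentially just a renaming of the $i=j$ case combined with the definition of the $L^2$-norm. The only minor bookkeeping point is to note that the range $0 \leq i \leq n$ is permissible as both $\partial_i^f$ (defined for $0 \leq i \leq n$ on $\Lambda_n(V)$) and $d_i^f$ (defined for $0 \leq i \leq n+1$ on $\Lambda_n(V)$) are defined at the same index $i$ in this range, so the composition $\partial_i^f \circ d_i^f : \Lambda_n(V) \to \Lambda_n(V)$ is well-defined.
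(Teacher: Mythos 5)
Your proposal is correct and is exactly the paper's argument: the paper introduces the corollary with the sentence ``Taking $f=g$ and $i=j$, the next corollary follows from Theorem~\ref{pr-2.3.2}~(ii)'' and gives no further proof, which matches your specialization plus the definition of the $L^2$-norm. Your added remark about the index range being well-defined is a harmless bonus.
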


 \smallskip

\subsection{The Weighted Boundary Operators and the Weighted Co-Boundary Operators  for Paths}\label{ss-2.3}

Let  $f:  V\longrightarrow \mathbb{R}$  be a real function on $V$.  Let $n\geq 0$.   In this subsection, we define  the $f$-weighted boundary operator as the  sum of the $f$-weighted face maps and define the $f$-weighted co-boundary operator as the   sum of the $f$-weighted co-face maps.   Then we   prove the anti-commutative properties for the  weighted boundary operators and  the   weighted co-boundary operators.

\smallskip

\begin{definition}\label{def-2.50}
We define the  {\it $f$-weighted boundary vector}   as a linear map 
\begin{eqnarray*}
\sum_{v\in V} f(v)\frac{\vec\partial}{\partial v}:~~~ \Lambda_n(V)\longrightarrow  \prod_{n+1}\Lambda_{n-1}(V).    
\end{eqnarray*}
For simplicity,  we use the   notation 
\begin{eqnarray*}
\vec\partial^f  :=\sum_{v\in V} f(v)\frac{\vec\partial}{\partial v}.   
\end{eqnarray*}
With the help of (\ref{eq-2.3.8})  we  can write  the $f$-weighted boundary vector as 
\begin{eqnarray}\label{eq-2.5.1}
\vec\partial^f=(\partial_0^f, \partial_1^f,\ldots, \partial_{n}^f).  
\end{eqnarray}
\end{definition}

\begin{definition}\label{def-2.53} 
Taking the   sums  of the coordinates  in  the $f$-weighted boundary vector (\ref{eq-2.5.1}),  we     define the {\it $f$-weighted boundary operator} as 
\begin{eqnarray*}
\partial^f= \sum_{i=0}^n   \partial_i^f:~~~\Lambda_n(V)\longrightarrow \Lambda_{n-1}(V).  
\end{eqnarray*}
\end{definition}

\begin{definition}\label{def-2.51}
We  define  the  {\it $f$-weighted co-boundary vector}   as a linear map 
\begin{eqnarray*}
\sum_{v\in V} f(v)\vec{d} v:~~~ \Lambda_n(V)\longrightarrow  \prod_{n+1}\Lambda_{n+2}(V).    
\end{eqnarray*}
For simplicity,  we use the   notation 
\begin{eqnarray*}
   \vec  d^ f  :=\sum_{v\in V} f(v)\vec{d} v.   
\end{eqnarray*}
With the help of (\ref{eq-2.3.9})  we  can write  the $f$-weighted co-boundary vector as 
\begin{eqnarray}\label{eq-2.5.2}
\vec  d^f=(d_0^f,d_1^f,\ldots, d_{n+1}^f). 
\end{eqnarray}  
\end{definition}

\begin{definition}\label{def-2.55}
Taking the   sums  of the coordinates  in   the $f$-weighted co-boundary  vector   (\ref{eq-2.5.2}),  we    define the {\it $f$-weighted co-boundary operator} as 
\begin{eqnarray*}
 d ^f= \sum_{i=0}^{n+1}  d_i^f:~~~\Lambda_n(V)\longrightarrow \Lambda_{n+1}(V).   
\end{eqnarray*}  
\end{definition}

\begin{definition}\label{def-2.56}
As a particular case of Definition~\ref{def-2.53} and Definition~\ref{def-2.55},   we take any $v\in V$  and   take $f$ to be the characteristic function $\chi_v$ of $v$  given by  
\begin{eqnarray*}
\chi_v(u)=\delta(v,u), ~~~  u\in V.  
\end{eqnarray*}
For any $v\in V$,  we let 
\begin{eqnarray}\label{eq-2.5.5}
\frac{\partial}{\partial v}= \partial^{\chi_v}, ~~~~~~~~~~~~  d v=d^{\chi_v}.  
\end{eqnarray}
\end{definition}

\begin{remark}
We  give a remark on Definition~\ref{def-2.56}.  
It is direct to see that  for any elementary $n$-path $v_0v_1\ldots v_n$  on $V$,  we have 
\begin{eqnarray*}
\frac{\partial}{\partial v} (v_0v_1 \ldots v_n)&=&\sum_{i=0}^n (-1)^i  \delta(v,v_i) v_0\ldots \widehat{v_i}\ldots v_n,
\\
 dv (v_0v_1\ldots v_{n})&=&\sum_{i=0}^{n+1} (-1)^i v_0v_1\ldots v_{i-1} v v_i v_{i+1}\ldots v_{n}.
\end{eqnarray*}
\end{remark}

By Definition~\ref{def-2.53},  Definition~\ref{def-2.55}    and Definition~\ref{def-2.56},    we  have the next lemma.  
 
\begin{lemma}
\label{le-2.5.x}
For any $u,v\in V$  we  have 
\begin{eqnarray}
\label{eq-2.5.7}
\frac{\partial}{\partial u}\circ\frac{\partial}{\partial v} =-\frac{\partial}{\partial v}\circ \frac{\partial}{\partial u}, ~~~~~~~~~~~~  d u \circ  dv=-dv\circ du.  
\end{eqnarray}
\end{lemma}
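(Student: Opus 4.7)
The plan is to deduce the lemma as the special case $f=\chi_u$, $g=\chi_v$ of a more general anti-commutation statement, namely $\partial^f\circ\partial^g=-\partial^g\circ\partial^f$ and $d^f\circ d^g=-d^g\circ d^f$ for arbitrary real functions $f,g$ on $V$. This stronger statement follows from Theorem~\ref{pr-2.3.2} by a routine double-sum rearrangement, and once it is in hand Definition~\ref{def-2.56} immediately yields both identities claimed in the lemma.

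For the face-map identity I would first unfold Definition~\ref{def-2.53} to write
\[
\partial^f\circ\partial^g+\partial^g\circ\partial^f=\sum_{i,j}\bigl(\partial_i^f\circ\partial_j^g+\partial_i^g\circ\partial_j^f\bigr),
\]
and then split the double sum into the three ranges $i<j$, $i=j$, and $i>j$. To the $i<j$ terms I would apply Theorem~\ref{pr-2.3.2}(i), which gives $\partial_i^f\partial_j^g=-\partial_{j-1}^g\partial_i^f$ and similarly with $f,g$ swapped. After the substitution $k=j-1$, the constraint $i<j$ becomes $i\leq k$, so this contribution is $-\sum_{i\leq k}(\partial_k^g\partial_i^f+\partial_k^f\partial_i^g)$. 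Relabelling the $i>j$ range as $(k,i)$ with $k>i$, the off-diagonal part $i<k$ cancels in pairs, and the diagonal $i=k$ produced by the reindexing cancels exactly the original $i=j$ piece. The net total is zero.

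The co-face identity follows by the same pattern: expand $d^f\circ d^g+d^g\circ d^f$, split by $i\leq j$ and $i>j$, apply Theorem~\ref{pr-2.3.2}(iii) to the first range and reindex via $k=j+1$ (so the range becomes $k>i$), then relabel the $i>j$ range as $(k,i)$ with $k>i$; since identity~(iii) already operates on the closed range $i\leq j$ there is no diagonal surplus, and the two contributions cancel cleanly. Specializing $(f,g)=(\chi_u,\chi_v)$ yields the lemma. The main obstacle is purely combinatorial bookkeeping: one has to verify that the index shifts $k=j-1$ and $k=j+1$ map their triangular ranges bijectively onto the ranges arising from the opposite half of the sum, and that in the face-map case the single leftover diagonal cancels the $i=j$ terms. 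No further computation beyond Theorem~\ref{pr-2.3.2} is required.
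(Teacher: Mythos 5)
Your argument is correct, but it takes a genuinely different route from the paper. The paper proves Lemma~\ref{le-2.5.x} by brute force: it expands $\frac{\partial}{\partial u}\circ\frac{\partial}{\partial v}(v_0v_1\ldots v_n)$ and $du\circ dv(v_0v_1\ldots v_n)$ as explicit double sums over elementary paths (with all the $\delta$'s and signs written out) and observes term-by-term that the two compositions are negatives of each other; it then derives the general statement $\partial^f\circ\partial^g=-\partial^g\circ\partial^f$ (Proposition~\ref{pr-2.5.5}) \emph{from} the lemma via Corollary~\ref{co-2.5.y}. You reverse this order: you deduce the general anti-commutation from Theorem~\ref{pr-2.3.2} by the classical triangular reindexing that proves $\partial^2=0$ from the simplicial identities, and then specialize $(f,g)=(\chi_u,\chi_v)$. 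This is logically sound --- Theorem~\ref{pr-2.3.2} is proved in the paper from Lemmas~\ref{le-2.2.1}--\ref{le-2.2.3} without any reference to Lemma~\ref{le-2.5.x}, so there is no circularity --- and your bookkeeping checks out: writing $A_{<}=\sum_{i<j}\partial_i^f\partial_j^g$ and $B_{\geq}=\sum_{k\geq i}\partial_k^g\partial_i^f$, the shift $k=j-1$ gives $A_{<}=-B_{\geq}$ and symmetrically $B_{<}=-A_{\geq}$, so the sum vanishes with the diagonal absorbed exactly as you describe; in the co-face case the shift $k=j+1$ turns the closed range $i\leq j$ into the strict range $i<k$, so no diagonal survives. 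What your approach buys is economy and conceptual clarity (it exhibits the lemma as the standard consequence of the simplicial-like identities, and gives Proposition~\ref{pr-2.5.5} for free); what the paper's direct computation buys is independence from Theorem~\ref{pr-2.3.2}. The only thing worth making explicit in a written version is the asymmetry of the index ranges (on $\Lambda_n(V)$ the inner index runs over one more value than the outer one), but the reindexing respects this.
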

\begin{proof}
For any $n\geq 0$ and any elementary $n$-path $v_0v_1\ldots v_n\in \Lambda_n(V)$,  we have 
\begin{eqnarray*}
\frac{\partial}{\partial u}\circ\frac{\partial}{\partial v}(v_0v_1\ldots v_n)&=&\frac{\partial}{\partial u}\Big(\sum_{j=0}^n (-1)^j\delta(v,v_j)v_0 \ldots \widehat{v_j} \ldots v_n\Big)\\
&=&\sum_{j=0}^n (-1)^j\delta(v,v_j)\frac{\partial}{\partial u}(v_0 \ldots \widehat{v_j} \ldots v_n)\\
&=&\sum_{j=0}^n (-1)^j\delta(v,v_j)\sum_{i=0}^{j-1} (-1)^i \delta(u,v_i)(v_0 \ldots \widehat{v_i}\ldots \widehat{v_j} \ldots v_n)\\
&& +\sum_{j=0}^n(-1)^j\delta(v,v_j)\sum_{i=j+1}^{n} (-1)^{i-1} \delta(u,v_i)(v_0 \ldots \widehat{v_i}\ldots \widehat{v_j} \ldots v_n)\\
&=&\sum_{0\leq i<j\leq n}(-1)^{i+j}\delta(u,v_i)\delta(v,v_j)(v_0 \ldots \widehat{v_i}\ldots \widehat{v_j} \ldots v_n)\\
&&+ \sum_{0\leq j<i\leq n}(-1)^{i+j-1}\delta(u,v_i)\delta(v,v_j)(v_0 \ldots \widehat{v_j}\ldots \widehat{v_i} \ldots v_n) 
\end{eqnarray*}
and 
\begin{eqnarray*}
\frac{\partial}{\partial v}\circ\frac{\partial}{\partial u}(v_0v_1\ldots v_n)&=&
\sum_{0\leq j<i\leq n}(-1)^{i+j}\delta(u,v_i)\delta(v,v_j)(v_0 \ldots \widehat{v_i}\ldots \widehat{v_j} \ldots v_n)\\
&&+ \sum_{0\leq i<j\leq n}(-1)^{i+j-1}\delta(u,v_i)\delta(v,v_j)(v_0 \ldots \widehat{v_j}\ldots \widehat{v_i} \ldots v_n). 
\end{eqnarray*}
Therefore,  for any elementary $n$-path $v_0v_1\ldots v_n$  on $V$,  we have
\begin{eqnarray*}
\frac{\partial}{\partial u}\circ\frac{\partial}{\partial v}(v_0v_1\ldots v_n)=-\frac{\partial}{\partial v}\circ\frac{\partial}{\partial u}(v_0v_1\ldots v_n).
\end{eqnarray*}
Consequently, by the   linear property of $\frac{\partial}{\partial u}\circ\frac{\partial}{\partial v}$ and $\frac{\partial}{\partial v}\circ\frac{\partial}{\partial u}$,  we   obtain the  first identity in (\ref{eq-2.5.7}).

Similarly,   For any $n\geq 0$ and any elementary $n$-path $v_0v_1\ldots v_n\in \Lambda_n(V)$,  we have  
\begin{eqnarray*}
du \circ  dv(v_0v_1\ldots v_n)&=&du\Big(\sum_{i=0}^{n+1} (-1)^i v_0\ldots v_{i-1} v v_i\ldots v_n\Big)\\
&=&\sum_{i=0}^{n+1} (-1)^i du(v_0\ldots v_{i-1} v v_i\ldots v_n)\\
&=&\sum_{i=0}^{n+1} (-1)^i \Big(\sum_{j=0}^{i-1}(-1)^j v_0\ldots v_{j-1}  u v_j \ldots v_{i-1} v  v_i \ldots v_n\\
&&+(-1)^i v_0\ldots v_{i-1}uvv_i\ldots v_n + (-1)^{i+1}v_0\ldots v_{i-1} v u v_i\ldots v_n\\
&&+  \sum_{j=i+1}^{n+1} (-1)^{j+1} v_0\ldots v_{i-1} v v_i \ldots v_{j-1} u v_j\ldots v_n\Big)
\end{eqnarray*}
and  
\begin{eqnarray*}
dv \circ  du(v_0v_1\ldots v_n)&=&\sum_{i=0}^{n+1}(-1)^i \Big(\sum_{j=0}^{i-1}(-1)^j v_0\ldots v_{j-1}  v v_j \ldots v_{i-1} u  v_i \ldots v_n\\
&&+(-1)^i v_0\ldots v_{i-1}vuv_i\ldots v_n + (-1)^{i+1}v_0\ldots v_{i-1} u v v_i\ldots v_n\\
&&+  \sum_{j=i+1}^{n+1} (-1)^{j+1} v_0\ldots v_{i-1} u v_i \ldots v_{j-1} v v_j\ldots v_n\Big).
\end{eqnarray*}
Thus 
\begin{eqnarray*}
du \circ dv(v_0v_1\ldots v_n)=
-dv \circ  du(v_0v_1\ldots v_n)  
\end{eqnarray*}
 for any $n\geq 0$ and any elementary $n$-path $v_0v_1\ldots v_n\in \Lambda_n(V)$.   Consequently,   by the linear property of  $du \circ dv(v_0v_1\ldots v_n)$  and $dv \circ  du(v_0v_1\ldots v_n)$,   we obtain   the second identity in (\ref{eq-2.5.7}).  
\end{proof}

The next corollary is a re-statement of Lemma~\ref{le-2.5.x}. 

\begin{corollary}\label{co-2.5.y}
For any  $u,v\in V$ we  have 
\begin{eqnarray*}
\sum_{i,j=0}^n\frac{\partial_i}{\partial v}\circ\frac{\partial_j}{\partial u} =-\sum_{i,j=0}^n  \frac{\partial_j}{\partial u}\circ\frac{\partial_i}{\partial v}, ~~~~~~~~~~~~
\sum_{i,j=0}^n d_i v \circ  d_j u=-\sum_{i,j=0}^n  d_j u\circ  d_i v. 
\end{eqnarray*}
\end{corollary}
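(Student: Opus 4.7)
The plan is that Corollary~\ref{co-2.5.y} is essentially a re-expression of Lemma~\ref{le-2.5.x} once one unpacks the abbreviations $\frac{\partial}{\partial v}$ and $dv$ introduced in Definition~\ref{def-2.56}.

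First, I would expand these two operators in terms of the indexed face maps and co-face maps from Subsection~\ref{ss-2.22}. For any $v\in V$ and any $n\geq 0$, combining Definition~\ref{def-2.56}, Definition~\ref{def-2.53}, and (\ref{eq-2.3.8}) gives, on $\Lambda_n(V)$,
\begin{eqnarray*}
\frac{\partial}{\partial v}=\partial^{\chi_v}=\sum_{i=0}^n\partial_i^{\chi_v}=\sum_{i=0}^n\sum_{w\in V}\delta(v,w)\frac{\partial_i}{\partial w}=\sum_{i=0}^n\frac{\partial_i}{\partial v},
\end{eqnarray*}
since $\chi_v(w)=\delta(v,w)$ annihilates every summand except the one with $w=v$. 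An entirely analogous computation, using Definition~\ref{def-2.55} and (\ref{eq-2.3.9}), yields $dv=\sum_{j=0}^{n+1}d_j v$ on $\Lambda_n(V)$.

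Next, I would substitute these expansions into the two identities of Lemma~\ref{le-2.5.x} and distribute composition over the sums. The left-hand side of the first identity becomes $\sum_{i,j}\frac{\partial_j}{\partial u}\circ\frac{\partial_i}{\partial v}$ and the right-hand side becomes $-\sum_{i,j}\frac{\partial_i}{\partial v}\circ\frac{\partial_j}{\partial u}$, which is exactly the first identity of Corollary~\ref{co-2.5.y} after renaming the dummy indices. The co-face identity follows in the same way from the second identity of Lemma~\ref{le-2.5.x}.

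The only mild subtlety is bookkeeping of the index ranges: on $\Lambda_n(V)$ the outer face index in a composition actually runs over $\{0,\dots,n-1\}$ rather than $\{0,\dots,n\}$, and similarly the inner and outer co-face indices use slightly different upper bounds. I would read the common upper limit $n$ appearing in the statement of Corollary~\ref{co-2.5.y} as shorthand for whichever range is compatible with the grading, or equivalently extend $\frac{\partial_i}{\partial v}$ and $d_i v$ by zero on out-of-range indices. Under that convention, no further computation is needed beyond the substitution above, so the corollary is indeed just a restatement of Lemma~\ref{le-2.5.x}.
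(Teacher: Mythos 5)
Your proposal is correct and follows essentially the same route as the paper: expand $\frac{\partial}{\partial v}=\sum_i\frac{\partial_i}{\partial v}$ and $dv=\sum_j d_j v$ via Definition~\ref{def-2.56}, distribute the composition over the sums, and invoke the anti-commutativity of Lemma~\ref{le-2.5.x}. Your remark on the index-range bookkeeping is a sensible clarification of a point the paper leaves implicit, but it does not alter the argument.
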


\begin{proof}
The first identity follows from Lemma~\ref{le-2.5.x} by  
\begin{eqnarray*}
&&\sum_{i,j=0}^n\frac{\partial_i}{\partial v}\circ\frac{\partial_j}{\partial u}=\Big(\sum_{i=0}^n \frac{\partial_i}{\partial v}\Big) \circ  \Big(\sum_{j=0}^n \frac{\partial_j}{\partial u}\Big)
= \frac{\partial}{\partial v}\circ\frac{\partial}{\partial u} \\
&=& -\frac{\partial}{\partial u}\circ\frac{\partial}{\partial v}=-\Big(\sum_{j=0}^n \frac{\partial_j}{\partial u}\Big) \circ  \Big(\sum_{i=0}^n \frac{\partial_i}{\partial v}\Big)=-\sum_{i,j=0}^n\frac{\partial_j}{\partial u}\circ\frac{\partial_i}{\partial v}. 
\end{eqnarray*}
The second identity follows from Lemma~\ref{le-2.5.x} by
\begin{eqnarray*}
&&\sum_{i,j=0}^n d_i v\circ   d_j u=\Big(\sum_{i=0}^n  d_i v\Big) \circ  \Big(\sum_{j=0}^nd_j u\Big)
= d v\circ  du \\
&=& -du \circ  dv =-\Big(\sum_{j=0}^n d_j u\Big) \circ  \Big(\sum_{i=0}^n  d_i v \Big)=-\sum_{i,j=0}^nd_j u\circ  d_i v. 
\end{eqnarray*}
  The corollary is proved. 
\end{proof}

 With the help of Corollary~\ref{co-2.5.y},  we generalize Lemma~\ref{le-2.5.x}   in the next proposition.  
\begin{proposition}[The anti-commutative properties]\label{pr-2.5.5}
Let $f$ and $g$ be two real functions on $V$.  Then 
\begin{eqnarray*}
\partial^{f}\circ \partial^g =-\partial^g\circ \partial ^f, ~~~~~~~~~~~~  d^f\circ d^g =- d^g\circ d^f. 
\end{eqnarray*}
In particular,  taking $f= \chi_u$ and $g=\chi_v$,  we obtain (\ref{eq-2.5.7}). 
\end{proposition}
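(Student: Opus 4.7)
The plan is to reduce the two anti-commutation identities to the single-vertex case already established in Corollary~\ref{co-2.5.y}, by linearity in the weight functions $f$ and $g$. The only mildly delicate point is a re-indexing in the summation that swaps the roles of $u,v$ and of $i,j$.

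First I would unfold the left-hand side of $\partial^f\circ\partial^g$ using Definition~\ref{def-2.53} together with (\ref{eq-2.3.8}). Writing $\partial^f=\sum_{i=0}^n\sum_{u\in V}f(u)\tfrac{\partial_i}{\partial u}$ and analogously for $\partial^g$ with index $v$ and letter $j$, bilinearity of composition over $\Lambda_*(V)$ gives
\begin{eqnarray*}
\partial^f\circ\partial^g\;=\;\sum_{u,v\in V}f(u)g(v)\sum_{i,j=0}^{n}\frac{\partial_i}{\partial u}\circ\frac{\partial_j}{\partial v}.
\end{eqnarray*}
Next, for every fixed pair $(u,v)$ I would invoke the first identity in Corollary~\ref{co-2.5.y} (applied with the roles of $u$ and $v$ exchanged, which is harmless since $u,v$ are dummy variables of summation) to rewrite the inner double sum as
\begin{eqnarray*}
\sum_{i,j=0}^{n}\frac{\partial_i}{\partial u}\circ\frac{\partial_j}{\partial v}\;=\;-\sum_{i,j=0}^{n}\frac{\partial_j}{\partial v}\circ\frac{\partial_i}{\partial u}.
\end{eqnarray*}
Substituting this back and re-indexing the outer sum via $u\leftrightarrow v$ and $i\leftrightarrow j$, the double sum in $(u,v)$ becomes $\sum_{u,v}g(u)f(v)\sum_{i,j}\tfrac{\partial_i}{\partial u}\circ\tfrac{\partial_j}{\partial v}$, which is exactly $\partial^g\circ\partial^f$; so the first identity falls out.

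The co-boundary identity $d^f\circ d^g=-d^g\circ d^f$ is established by the same template: expand using Definition~\ref{def-2.55} and (\ref{eq-2.3.9}), pull the coefficients $f(u)g(v)$ outside, apply the second identity of Corollary~\ref{co-2.5.y} to the inner double sum of $d_iv\circ d_ju$'s, and re-index. Finally, specializing $f=\chi_u$ and $g=\chi_v$ collapses each outer sum to a single term, and in view of (\ref{eq-2.5.5}) this recovers (\ref{eq-2.5.7}) as asserted.

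I do not expect any substantive obstacle: the whole argument is linearity plus a clean application of Corollary~\ref{co-2.5.y}. The only thing to be careful about is making sure the dummy-variable swap is performed consistently in both the vertex indices and the position indices, so that what re-emerges is genuinely $\partial^g\circ\partial^f$ (respectively $d^g\circ d^f$) rather than some mis-indexed variant.
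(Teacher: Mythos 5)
Your proposal is correct and follows essentially the same route as the paper: expand $\partial^f\circ\partial^g$ (resp. $d^f\circ d^g$) by linearity into a double sum over vertices and position indices, apply Corollary~\ref{co-2.5.y} to the inner sum, and identify the result with $-\partial^g\circ\partial^f$ (resp. $-d^g\circ d^f$). Your explicit dummy-variable re-indexing is carried out correctly; the paper simply writes out both compositions as double sums and compares them directly, which amounts to the same thing.
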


\begin{proof}
Let $n\geq 0$.   Since
\begin{eqnarray*}
\partial^f= \sum_{i=0}^n \partial_i^f = \sum_{i=0}^n \sum_{v\in V} f(v) \frac{\partial_i}{\partial v}, 
\end{eqnarray*}
by a straight-forward calculation we have  
\begin{eqnarray*}
\partial^f\circ \partial^g&=& \Big(\sum_{i=0}^n \sum_{v\in V} f(v) \frac{\partial_i}{\partial v}\Big)\circ\Big(\sum_{j=0}^n \sum_{u\in V} g(u) \frac{\partial_j}{\partial u}\Big)\\
&=& \sum_{v,u\in V} f(v)g(u) \sum_{i,j=0}^n\frac{\partial_i}{\partial v}\circ\frac{\partial_j}{\partial u} 
\end{eqnarray*}
and
\begin{eqnarray*}
\partial^g\circ \partial^f&=& \Big(\sum_{j=0}^n \sum_{u\in V} g(u) \frac{\partial_i}{\partial u}\Big)\circ\Big(\sum_{i=0}^n \sum_{v\in V} f(v) \frac{\partial_i}{\partial v}\Big)\\
&=&\sum_{v,u\in V} f(v)g(u)\sum_{i,j=0}^n  \frac{\partial_j}{\partial u}\circ\frac{\partial_i}{\partial v}.   
\end{eqnarray*}
Hence with the help of the first identity in Corollary~\ref{co-2.5.y},  we have 
$
\partial^{f}\circ \partial^g =-\partial^g\circ \partial ^f$.   Similarly, since
\begin{eqnarray*}
d^f= \sum_{i=0}^n d_i^f = \sum_{i=0}^n \sum_{v\in V} f(v) d_i v, 
\end{eqnarray*}
by a straight-forward calculation we have  
\begin{eqnarray*}
d^f\circ d^g&=& \Big(\sum_{i=0}^n \sum_{v\in V} f(v) d_i v\Big)\circ\Big(\sum_{j=0}^n \sum_{u\in V} g(u)d_j u\Big)\\
&=& \sum_{v,u\in V} f(v)g(u) \sum_{i,j=0}^nd_i v\circ d_j u 
\end{eqnarray*}
and
\begin{eqnarray*}
d^g\circ d^f&=& \Big(\sum_{j=0}^n \sum_{u\in V} g(u) d_j u\Big)\circ\Big(\sum_{i=0}^n \sum_{v\in V} f(v) d_i v\Big)\\
&=&\sum_{v,u\in V} f(v)g(u)\sum_{i,j=0}^n  d_j u\circ  d_i v.   
\end{eqnarray*}
 Hence with the help of the second identity in Corollary~\ref{co-2.5.y},  we have 
$
d^{f}\circ d^g =-d^g\circ d^f$.
 The proposition follows.  
\end{proof}


With respect to the canonical inner product   $\langle~,~\rangle$ on $\Lambda_*(V)$  given by 
\begin{eqnarray}\label{eq-inn}
\langle  v_0v_1\ldots v_n, u_0u_1\ldots u_n\rangle =\prod_{i=0}^n \delta(v_i,u_i),  ~~~n=0,1,2,\ldots, 
\end{eqnarray}
the linear operator $dv$  is adjoint  to $\frac{\partial }{\partial v}$ for any $v\in V$.   Precisely,  we   have the next lemma.   
 
 \begin{lemma}\label{le-adj}
 For any $n\geq 1$,  any $\xi\in \Lambda_{n-1}(V)$,   and any $\eta\in \Lambda_{n}(V)$,   we  have 
\begin{eqnarray}\label{eq-2.in}
\langle  \frac{\partial}{\partial v}(\eta),\xi\rangle =\langle \eta, dv(\xi)\rangle.  
\end{eqnarray}
 \end{lemma}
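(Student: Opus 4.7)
The plan is to reduce the claim to a term-by-term comparison on elementary paths. Since both the partial derivative $\frac{\partial}{\partial v}$ and the partial differentiation $dv$ are linear, and since the inner product (\ref{eq-inn}) is bilinear, it suffices to verify (\ref{eq-2.in}) when $\eta = v_0 v_1 \ldots v_n$ is an elementary $n$-path and $\xi = u_0 u_1 \ldots u_{n-1}$ is an elementary $(n-1)$-path. After that is done, both sides extend to all of $\Lambda_n(V) \times \Lambda_{n-1}(V)$ by distributing sums.

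Next I would expand the left-hand side using the explicit formula from the remark after Definition~\ref{def-2.56},
\begin{eqnarray*}
\frac{\partial}{\partial v}(v_0 v_1 \ldots v_n) = \sum_{i=0}^n (-1)^i \delta(v,v_i)\, v_0 \ldots \widehat{v_i} \ldots v_n,
\end{eqnarray*}
and pair it against $u_0 u_1 \ldots u_{n-1}$ via (\ref{eq-inn}). This produces a sum indexed by $i \in \{0,1,\ldots,n\}$, whose $i$-th summand is
\begin{eqnarray*}
(-1)^i \delta(v,v_i) \prod_{k=0}^{i-1}\delta(v_k,u_k) \prod_{k=i}^{n-1}\delta(v_{k+1},u_k).
\end{eqnarray*}
Expanding the right-hand side similarly, using
\begin{eqnarray*}
dv(u_0 u_1 \ldots u_{n-1}) = \sum_{j=0}^n (-1)^j u_0 \ldots u_{j-1} v u_j \ldots u_{n-1}
\end{eqnarray*}
and pairing with $v_0 v_1 \ldots v_n$, yields a sum indexed by $j \in \{0,1,\ldots,n\}$ whose $j$-th summand is
\begin{eqnarray*}
(-1)^j \prod_{k=0}^{j-1}\delta(v_k,u_k)\cdot \delta(v_j,v)\cdot \prod_{k=j+1}^{n}\delta(v_k,u_{k-1}).
\end{eqnarray*}

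The final step is the bookkeeping identification: setting $j = i$ and reindexing the last product on the right (via the substitution $k \mapsto k+1$), the two $i$-th summands coincide, so the two sums agree. The main (and essentially only) obstacle is keeping the index shifts straight: the ``hat'' $\widehat{v_i}$ on the LHS must correspond to the inserted $v$ at slot $j$ on the RHS, and one must carefully check that the constraint $v_i = v$ appearing on the LHS matches $\delta(v_j,v)$ on the RHS, while the matching conditions on the remaining coordinates align after relabeling. Once this bookkeeping is done, the identity follows on basis elements, and bilinearity completes the proof.
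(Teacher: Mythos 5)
Your proposal is correct and follows essentially the same route as the paper: reduce to elementary paths by bilinearity, expand both sides using the explicit formulas for $\frac{\partial}{\partial v}$ and $dv$, and match the $i$-th and $j$-th summands after a shift of index. The only cosmetic difference is that the paper packages the computation as "the adjoint operator $(\frac{\partial}{\partial v})^*$ applied to $u_0u_1\ldots u_{n-1}$ equals $dv(u_0u_1\ldots u_{n-1})$" by summing over the basis of elementary $n$-paths, whereas you compare the two pairings directly; the underlying bookkeeping is identical.
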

 \begin{proof}
 we take $\eta$  to be an elementary $n$-path $v_0v_1\ldots v_n\in \Lambda_n(V)$  and  take  $\xi$  to be an elementary $(n-1)$-path $u_0u_1\ldots u_{n-1}\in \Lambda_{n-1}(V)$.  Then 
\begin{eqnarray*}
 \langle \frac{\partial}{\partial v}(v_0v_1\ldots v_n),   u_0u_1\ldots u_{n-1} \rangle 
&=&\langle \sum _{i=0}^n (-1)^i \delta(v,v_i)v_0\ldots \widehat{v_i}\ldots v_n, u_0u_1\ldots u_{n-1}\rangle\\
&=&\sum _{i=0}^n (-1)^i \delta(v,v_i)\prod_{j=0}^{i-1}\delta(v_j,u_j) \prod _{j=i}^{n-1}\delta(v_{j+1},u_{j}). 
\end{eqnarray*}
Consequently,  if we use $(\frac{\partial}{\partial v})^*$  to denote the adjoint linear operator of  $\frac{\partial}{\partial v}$,  then  we  have 
\begin{eqnarray*}
(\frac{\partial}{\partial v})^* (u_0u_1\ldots u_{n-1})&=&\sum_{v_0,v_1,\ldots, v_n\in V}  \langle v_0v_1\ldots v_n, (\frac{\partial}{\partial v})^* (u_0u_1\ldots u_{n-1}) \rangle  v_0v_1\ldots v_n \nonumber\\
&=&\sum_{v_0,v_1,\ldots, v_n\in V}  \langle \frac{\partial}{\partial v}  (v_0v_1\ldots v_n),  u_0u_1\ldots u_{n-1} \rangle  v_0v_1\ldots v_n \nonumber\\
&=&\sum_{v_0,v_1,\ldots, v_n\in V}\Big(\sum _{i=0}^n (-1)^i \delta(v,v_i)\prod_{j=0}^{i-1}\delta(v_j,u_j) \prod _{j=i}^{n-1}\delta(v_{j+1},u_{j})\Big) v_0v_1\ldots v_n \nonumber\\
&=&\sum_{i=0}^n (-1)^i \Big( \sum_{v_0,v_1,\ldots, v_n\in V} \delta(v,v_i)\prod_{j=0}^{i-1}\delta(v_j,u_j) \prod _{j=i}^{n-1}\delta(v_{j+1},u_{j})\Big)v_0v_1\ldots v_n \nonumber\\ 
&=&\sum_{i=0}^n (-1)^i u_0u_1\ldots u_{i-1} v u_i u_{i+1}\ldots u_{n-1} \nonumber\\
&=& dv (u_0u_1\ldots u_{n-1}).     
\end{eqnarray*} 
Therefore,    by the linear property of  $(\frac{\partial}{\partial v})^*$  and  $dv$,    we  have  
\begin{eqnarray*}
(\frac{\partial}{\partial v})^*=dv. 
\end{eqnarray*}
The  lemma is proved.  
 \end{proof}

 In general,   the linear operator $d^f$  is adjoint to $\partial^f$  for any real function $f$  on $V$.   The next proposition follows from Lemma~\ref{le-adj}.

 \begin{proposition}[The adjoint property]
 \label{pr-adj}
 Let $f$  be any real function on $V$.  Then for any $n\geq 1$,  any $\xi\in \Lambda_{n-1}(V)$,   and any $\eta\in \Lambda_{n}(V)$,   we  have 
\begin{eqnarray}\label{eq-88.in}
\langle  \partial^f(\eta),\xi\rangle =\langle \eta, d^f(\xi)\rangle.  
\end{eqnarray}
 \end{proposition}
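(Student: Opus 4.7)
The plan is to reduce the general statement to the single-vertex case already handled in Lemma~\ref{le-adj}, exploiting the fact that $\partial^{f}$ and $d^{f}$ depend linearly on $f$ and that taking adjoints is linear for real scalars. So I would begin by rewriting the operators as weighted sums over $V$, namely
\begin{eqnarray*}
\partial^{f} = \sum_{v \in V} f(v)\, \frac{\partial}{\partial v}, \qquad d^{f} = \sum_{v \in V} f(v)\, dv,
\end{eqnarray*}
which is immediate from Definitions~\ref{def-2.53}, \ref{def-2.55}, and \ref{def-2.56} once one notices that $\sum_{i} \partial_i^{f} = \sum_{v} f(v) \sum_i \frac{\partial_i}{\partial v} = \sum_v f(v) \frac{\partial}{\partial v}$, and similarly for the co-boundary.

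With this reformulation in hand, the main computation is just bilinearity of the inner product $\langle\,,\,\rangle$ on $\Lambda_*(V)$. For $\xi \in \Lambda_{n-1}(V)$ and $\eta \in \Lambda_n(V)$, I would expand
\begin{eqnarray*}
\langle \partial^{f}(\eta), \xi \rangle = \Big\langle \sum_{v \in V} f(v)\, \frac{\partial}{\partial v}(\eta), \xi \Big\rangle = \sum_{v \in V} f(v)\, \Big\langle \frac{\partial}{\partial v}(\eta), \xi \Big\rangle,
\end{eqnarray*}
then apply Lemma~\ref{le-adj} to each summand to rewrite $\langle \frac{\partial}{\partial v}(\eta), \xi \rangle$ as $\langle \eta, dv(\xi) \rangle$, and finally pull the sum back inside the inner product to recover $\langle \eta, d^{f}(\xi) \rangle$. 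This gives (\ref{eq-88.in}) directly.

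There is essentially no obstacle here: the real functions $f(v)$ are scalars that may be freely moved in and out of the inner product, and the key non-trivial input, namely the duality $(\partial/\partial v)^{*} = dv$, has already been established. The only things to double-check are that the sums over $v \in V$ are finite (they are, since $V$ is finite) and that the bilinearity conventions match throughout, so no subtle sign or conjugation issue intrudes. Thus the proof should be essentially a two-line bilinearity argument grafted onto Lemma~\ref{le-adj}.
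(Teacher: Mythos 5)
Your proposal is correct and follows essentially the same route as the paper: both rewrite $\partial^{f}=\sum_{v\in V}f(v)\frac{\partial}{\partial v}$ and $d^{f}=\sum_{v\in V}f(v)\,dv$, use bilinearity of the inner product to reduce to the single-vertex duality $(\frac{\partial}{\partial v})^{*}=dv$ of Lemma~\ref{le-adj}, and then resum. No gaps.
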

 
 \begin{proof}
 Let $n\geq 1$.   Let $f$  be any real function on $V$.   Let    $\xi\in \Lambda_{n-1}(V)$     and   $\eta\in \Lambda_{n}(V)$.  Then  
 \begin{eqnarray*}
 \langle  \partial^f(\eta),\xi\rangle&=& \langle  \sum_{i=0}^n   \partial_i^f(\eta),\xi\rangle\\
 &=&\Big\langle  \sum_{i=0}^n\Big(\sum_{v\in V}  f(v) \dfrac{\partial_i}{\partial v}\Big) (\eta),\xi\Big\rangle\\
 &=&\Big\langle \sum_{v\in V}  f(v) \sum_{i=0}^n  \Big(\dfrac{\partial_i}{\partial v}\Big)(\eta),\xi\Big\rangle\\
 &=&\Big\langle \sum_{v\in V} f(v)  \dfrac{\partial }{\partial v}(\eta),\xi\Big\rangle\\
 &=&\sum_{v\in V}  f(v)  \langle \eta, dv (\xi) \rangle\\
 &=&\langle \eta,  d^f (\xi)\rangle. 
 \end{eqnarray*}
 We  obtain (\ref{eq-88.in}). 
  \end{proof}

\smallskip

We prove some  Newton-Leibniz-type rules in the next proposition. 

\begin{proposition}[The Newton-Leibniz-type rules]\label{pr-2.5.0}
Let $f$  be a real function on $V$.  
Let $n,m\geq 0$.  Let $\xi\in \Lambda_n(V)$ be given in (\ref{eq-2.0.1}) and let  $\eta\in \Lambda_m(V)$ be given in (\ref{eq-2.0.2}).  Then  
\begin{eqnarray}\label{eq-nl-1}
\partial^f (\xi *\eta)&=&  \partial^f(\xi) *\eta +  (-1)^{n+1}\xi *\partial^f(\eta),\\
 d^f (\xi*\eta)&=&  d^f(\xi)*\eta + (-1)^{n+1} \xi* d^f(\eta). 
 \label{eq-nl-2}
\end{eqnarray}
\end{proposition}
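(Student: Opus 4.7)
The plan is to reduce to elementary paths by bilinearity, and then to split the defining sum of $\partial^f(\xi*\eta)$ (resp.\ $d^f(\xi*\eta)$) along the junction where the $\xi$-block ends and the $\eta$-block begins in the concatenation. By the bilinearity of $*$ together with the linearity of $\partial^f$ and $d^f$, it is enough to verify (\ref{eq-nl-1}) and (\ref{eq-nl-2}) for elementary paths $\xi = v_0 v_1 \ldots v_n$ and $\eta = u_0 u_1 \ldots u_m$. I write $\xi*\eta = w_0 w_1 \ldots w_{n+m+1}$, with $w_i = v_i$ for $0 \leq i \leq n$ and $w_{n+1+j} = u_j$ for $0 \leq j \leq m$, so that Proposition~\ref{le-2.3.99} makes both sides of each identity into explicit signed sums parametrized by a single integer (the deletion index $k$ for $\partial^f$, or the insertion index $k$ for $d^f$).

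For (\ref{eq-nl-1}), I expand
$$
\partial^f(\xi*\eta) = \sum_{k=0}^{n+m+1} (-1)^k f(w_k)\, w_0 \ldots \widehat{w_k} \ldots w_{n+m+1}
$$
and split the range of $k$ at the junction $k=n$. The subsum over $0 \leq k \leq n$ deletes a vertex from the $\xi$-block and reassembles by inspection as $\partial^f(\xi)*\eta$. In the subsum over $n+1 \leq k \leq n+m+1$, I reindex by $j = k-(n+1)$; the sign $(-1)^k = (-1)^{n+1}(-1)^j$ lets me pull $(-1)^{n+1}$ out of the sum, and what remains regroups into $\xi*\partial^f(\eta)$, establishing (\ref{eq-nl-1}).

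For (\ref{eq-nl-2}), I apply the same split-and-reindex strategy to
$$
d^f(\xi*\eta) = \sum_{k=0}^{n+m+2} (-1)^k \sum_{v\in V} f(v)\, w_0 \ldots w_{k-1}\, v\, w_k \ldots w_{n+m+1}.
$$
Insertions of $v$ at positions inside the $\xi$-block collect into $d^f(\xi)*\eta$; insertions at positions inside the $\eta$-block, after the reindexing $j = k-(n+1)$ and the extraction of $(-1)^{n+1}$, collect into $(-1)^{n+1}\xi*d^f(\eta)$. The step that will demand the most care is the junction index $k = n+1$, where $v$ is inserted precisely between $v_n$ and $u_0$ and which is formally visible both from the right end of $d^f(\xi)$ (namely $d_{n+1}^f(\xi)*\eta$) and from the left end of $d^f(\eta)$ (namely $(-1)^{n+1}\xi*d_0^f(\eta)$); tracking the sign $(-1)^{n+1}$ on these two candidate placements and assigning the junction term consistently with the right-hand side of (\ref{eq-nl-2}) is the only delicate bookkeeping before the two halves recombine to the stated identity.
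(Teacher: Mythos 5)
Your treatment of (\ref{eq-nl-1}) is correct and is essentially the paper's own argument: the $n+m+2$ deletion positions of $\partial^f(\xi*\eta)$ partition cleanly into the $n+1$ positions inside the $\xi$-block and the $m+1$ positions inside the $\eta$-block, the reindexing $j=k-(n+1)$ extracts the factor $(-1)^{n+1}$, and the two subsums reassemble as $\partial^f(\xi)*\eta$ and $(-1)^{n+1}\xi*\partial^f(\eta)$.

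The gap is in (\ref{eq-nl-2}), at exactly the junction term you flag and then defer. There is no consistent assignment to be made there: with Definition~\ref{def-2.55}, $d^f$ on $\Lambda_N(V)$ has $N+2$ insertion slots, so the left-hand side of (\ref{eq-nl-2}) has $n+m+3$ slots while $d^f(\xi)*\eta$ contributes $n+2$ and $(-1)^{n+1}\xi*d^f(\eta)$ contributes $m+2$, i.e.\ $n+m+4$ in total. The two candidates for the junction, $d^f_{n+1}(\xi)*\eta$ and $(-1)^{n+1}\,\xi*d^f_0(\eta)$, are not alternatives between which one chooses; a direct computation shows they are \emph{equal}, both being $(-1)^{n+1}\sum_{v\in V}f(v)\,v_0\ldots v_n\,v\,u_0\ldots u_m$, which is precisely the single term $d^f_{n+1}(\xi*\eta)$ on the left. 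Hence the right-hand side of (\ref{eq-nl-2}) exceeds the left-hand side by one copy of this junction term. For instance, with $n=m=0$, $\xi=v_0$, $\eta=u_0$ one has
\begin{eqnarray*}
d^f(v_0u_0)=\sum_{v\in V}f(v)\big(vv_0u_0-v_0vu_0+v_0u_0v\big),\qquad
d^f(v_0)*u_0-v_0*d^f(u_0)=\sum_{v\in V}f(v)\big(vv_0u_0-2\,v_0vu_0+v_0u_0v\big),
\end{eqnarray*}
which differ by $\sum_{v\in V}f(v)\,v_0vu_0$. To be fair, the paper's own proof simply asserts the decomposition $d^f(\xi*\eta)=\sum_{i=0}^{n+1}d_i^f(\xi)*\eta+(-1)^{n+1}\sum_{i=0}^{m+1}\xi*d_i^f(\eta)$ without verifying it and commits the same overcount; but as written your argument cannot be completed, because the ``delicate bookkeeping'' you postpone is the point at which the claimed identity (\ref{eq-nl-2}) fails, absent a correction term of the form $-(-1)^{n+1}\,\xi*\big(\sum_{v\in V}f(v)\,v\big)*\eta$.
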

\begin{proof}
By a straight-forward calculation and with the help of (\ref{eq-2.0.3}),  we  have 
\begin{eqnarray*}
\partial^f(v_0v_1\ldots v_n u_0u_1\ldots u_m)&=&\sum_{i=0}^{n+m+1} \partial_i^f(v_0v_1\ldots v_n u_0u_1\ldots u_m) \\
&=&\sum_{i=0}^{n+m+1} \sum_{w\in V} f(w) \frac{\partial_i}{\partial  w}(v_0v_1\ldots v_n u_0u_1\ldots u_m)\\
&=&\sum_{i=0}^{n} \sum_{v\in V} (-1)^i f(v)\delta(v,v_i) (v_0\ldots \widehat{v_i}\ldots  v_n)*( u_0u_1\ldots u_m)\\
&&+   \sum_{i=0}^{m}  \sum_{u\in V}(-1)^{n+i+1}f(u)\delta(u,u_i)(v_0v_1\ldots v_n)  * (u_0 \ldots\widehat{u_i}\ldots u_m)\\
 &=&\sum_{i=0}^{n} \partial_i^f(v_0v_1\ldots v_n) * (u_0u_1\ldots u_m) \\
 &&+ (-1)^{n+1} \sum_{i=0}^{m}  (v_0v_1\ldots v_n)  * \partial_i^f(u_0u_1\ldots u_m)\\
 &=&\partial^f (v_0v_1\ldots v_n) * (u_0u_1\ldots u_m) \\
 &&+ (-1)^{n+1} (v_0v_1\ldots v_n) * \partial^f(u_0u_1\ldots u_m).  
\end{eqnarray*}
Consequently,  
\begin{eqnarray*}
\partial^f (\xi *\eta)&=& \sum_{v_0,v_1,\ldots,v_n\in V;\atop u_0,u_1,\ldots, u_m\in V}  r_{v_0v_1\ldots v_n}  t_{u_0u_1\ldots u_m} \partial^f (  v_0v_1\ldots v_n u_0u_1\ldots u_m)\\
&=& \sum_{v_0,v_1,\ldots,v_n\in V;\atop u_0,u_1,\ldots, u_m\in V}  r_{v_0v_1\ldots v_n}  t_{u_0u_1\ldots u_m}\Big( \partial^f (  v_0v_1\ldots v_n )* (u_0u_1\ldots u_m)\\
&& ~~~~~~~~~~~~~~~~~~~~~~~~~~~~~~~~~~~~~~~~~ +  (-1)^{n+1}  (v_0v_1\ldots v_n) * \partial^f(u_0u_1\ldots u_m)\Big) \\
&=&  \partial^f(\xi) *\eta +  (-1)^{n+1}\xi *\partial^f(\eta).  
\end{eqnarray*}
We obtain (\ref{eq-nl-1}).  On the other hand,  it follows from   a straight-forward calculation  that 
\begin{eqnarray*}
d^f(v_0v_1\ldots v_n u_0u_1\ldots u_n)&=&\sum_{i=0}^{n+1} d_i^f(v_0v_1\ldots v_n) * (u_0u_1\ldots u_n) \\
&&+ (-1)^{n+1} \sum_{i=0}^{m+1}  (v_0v_1\ldots v_n)  * d_i^f(u_0u_1\ldots u_n).  
 \end{eqnarray*}
Thus  by a formal calculation analogous with the above  proof of (\ref{eq-nl-1}),  we  obtain (\ref{eq-nl-2}). 
 The proposition follows.  
\end{proof}

By the end of this subsection,   we summarize  Proposition~\ref{pr-2.5.5} and Proposition~\ref{pr-2.5.0}.    We  give   the  exterior algebra $T_*(V)$ generated by all the weighted boundary operators and the  exterior algebra  $T^*(V)$  generated by all the weighted co-boundary operators on the discrete set $V$,  in the following list.  

\begin{quote}
\begin{itemize}
\item
Let $T_*(V)=\bigoplus_{k\geq 0}  T_k(V)$ be  the exterior algebra  spanned by  $\partial ^f$  for all real functions $f$  on $V$.  Then we  have all of the followings: 
\begin{enumerate}[(i).]
\item
 for each $k\geq 0$ and for each   $\alpha \in T_k(V)$,  we  have a graded linear map $\alpha: \Lambda_n(V)\longrightarrow \Lambda_{n-k}(V)$, $n= 0,1,2\ldots$;    
\item
the exterior product in  $T_*(V)$   is the composition  of the graded linear maps in (i); 
\item
the operations of  $T_*(V)$  on $\Lambda_*(V)$ satisfy the Newton-Leibniz-type law (\ref{eq-nl-1}).   
\end{enumerate}

\item
Let $T^*(V)=\bigoplus_{k\geq 0}  T^k(V)$ be  the exterior algebra  spanned by  $d ^f$  for all real functions $f$  on $V$.  Then we  have all of the followings: 
\begin{enumerate}[(i)'.]
\item
 for each $k\geq 0$ and for each   $\omega \in T^k(V)$,  we  have a graded linear map $\omega: \Lambda_n(V)\longrightarrow \Lambda_{n+k}(V)$, $n= 0,1,2\ldots$;    
\item
the exterior product in  $T^*(V)$   is the composition  of the graded linear maps in (i)'; 
\item
the operations of  $T^*(V)$  on $\Lambda_*(V)$ satisfy the Newton-Leibniz-type law (\ref{eq-nl-2}).   
\end{enumerate}
\end{itemize}
\end{quote}

\smallskip

\subsection{A Contrast with The Usual Simplicial Identities }\label{ss2.4}

In this subsection,  we re-state the usual simplicial identities of  simplicial sets,  in    contrast  with Theorem~\ref{pr-2.3.2}.  This subsection is supplementary to Subsection~\ref{ss2.2} - Subsection~\ref{ss-2.3}.  

\smallskip

Consider the special function $f\equiv 1$, that is,  $f(v)$ takes the constant value $1$  for all   $v\in V$.  We denote  this function $f$ as $1$.  We  denote  the corresponding $f$-weighted  face maps as  $\partial_i ^1$   for  $0\leq i\leq n$  and denote the corresponding $f$-weighted  co-face maps as $d_i^1$   for   $0\leq i\leq n+1$.  Let $v_0v_1\ldots v_n$ be an elementary $n$-path on $V$.   It follows that 
\begin{eqnarray*}
\partial^1_i  (v_0v_1\ldots v_n)= (-1)^i v_0\ldots \widehat{v_i}\ldots v_n.  
\end{eqnarray*}
Thus  the linear map $(-1)^i \partial^1_i: \Lambda_n(V)\longrightarrow \Lambda_{n-1}(V)$  given by  
\begin{eqnarray}\label{eq-2.6.1}
(-1)^i \partial^1_i(v_0v_1\ldots v_n)=   v_0\ldots \widehat{v_i}\ldots v_n
\end{eqnarray}
 is the  usual  face map of simplicial sets (cf. \cite[pp. 110-111]{curtis}).   
 Moreover,  for $0\leq i\leq n$,  we use the linear map $s_i:  \Lambda_n(V)\longrightarrow \Lambda_{n+1}(V)$  to denote the $i$-th degeneracy (cf. \cite[pp. 110-111]{curtis}) given by 
\begin{eqnarray}\label{eq-2.6.2}
s_i(v_0v_1\ldots v_n)= v_0\ldots v_{i-1} v_i v_i v_{i+1}\ldots v_n.   
\end{eqnarray}
Then for any elementary $n$-path $v_0v_1\ldots v_n$ on $V$,  we  have 
\begin{eqnarray}\label{eq-2.6.3}
s_i(v_0v_1\ldots v_n)&=&(-1)^i d_iv_i (v_0v_1\ldots v_n) \nonumber\\
&=&(-1)^i d_i^{\chi_{v_i}}(v_0v_1\ldots v_n)
\end{eqnarray}
and 
\begin{eqnarray}
s_i(v_0v_1\ldots v_n)&=&(-1)^{i+1} d_{i+1} v_i (v_0v_1\ldots v_n)\nonumber\\
&=&(-1)^{i+1} d_{i+1}^{\chi_{v_i}}(v_0v_1\ldots v_n).  
\label{eq-2.6.5}  
\end{eqnarray}
 With the help of (\ref{eq-2.6.1}) - (\ref{eq-2.6.5}),   the   simplicial identities  (cf. \cite[p. 110]{curtis})   can be re-stated   in the next proposition. 
\begin{proposition}[The usual simplicial identities,   in    contrast  with Theorem~\ref{pr-2.3.2}]
\label{pr-2.5.1}
Let $n\geq 0$ and $0\leq i,j\leq n$.  Then we  have  the first simplicial identity 
\begin{eqnarray*}
\partial^1_i \partial^1_j=-\partial^1_{j-1}\partial^1_i,   ~~~~~~~~~~~~ i<j,
\end{eqnarray*}
 the second simplicial identity 
\begin{eqnarray*}
\partial^1_i s_j =\begin{cases}
s_{j-1} \partial^1_i,  & i<j,\\
{\rm id},  & i=j, j+1,\\
-s_j \partial_{i-1},  & i>j+1, 
\end{cases}
\end{eqnarray*}
and the third simplicial identity 
\begin{eqnarray*}
s_i s_j= s_{j+1} s_i,  ~~~~~~~~~~~~  i\leq j. 
\end{eqnarray*}
\qed
\end{proposition}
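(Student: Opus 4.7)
The plan is to dispatch identity (i) directly from the main theorem of Subsection~\ref{ss-2.22}, and to handle (ii) and (iii) by a direct combinatorial verification on elementary paths. For (i), I observe that the constant function $f \equiv 1$ on $V$ is a real-valued function on $V$, so Theorem~\ref{pr-2.3.2}(i) applied with $f=g=1$ immediately gives $\partial_i^1 \circ \partial_j^1 = -\partial_{j-1}^1 \circ \partial_i^1$ for all $i<j$.

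For (ii) and (iii), the key subtlety is that the degeneracy $s_j$ is \emph{not} a weighted co-face map $d_j^f$ for a fixed real function $f$ on $V$: the identities (\ref{eq-2.6.3}) and (\ref{eq-2.6.5}) express $s_j$ on an elementary path $v_0v_1\ldots v_n$ in terms of the characteristic function $\chi_{v_j}$, whose ``weight vertex'' $v_j$ varies with the input. Hence Theorem~\ref{pr-2.3.2}(ii) and (iii) cannot be invoked verbatim. Instead I would fix an elementary $n$-path $v_0v_1\ldots v_n$ and compute both sides of each identity using (\ref{eq-2.6.1}) and (\ref{eq-2.6.2}), then extend by the linear property as in the proofs of Lemmas~\ref{le-2.2.1}--\ref{le-2.2.3}.

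For (ii), I would split into the three cases of the statement. When $i<j$, both $\partial^1_i s_j$ and $s_{j-1}\partial^1_i$ yield $(-1)^i v_0\ldots\widehat{v_i}\ldots v_{j-1} v_j v_j v_{j+1}\ldots v_n$, where the shift of the degeneracy index from $j$ to $j-1$ compensates for the removal of $v_i$. When $i=j$ or $i=j+1$, the face operator $\partial^1_i$ removes one of the two consecutive copies of $v_i$ inserted by $s_j$, collapsing the composition to the identity up to sign. When $i>j+1$, the positions of the degeneracy and the removed vertex are disjoint and a straightforward index shift produces the stated identity. For (iii), I would just unfold both $s_i s_j$ and $s_{j+1} s_i$ on $v_0\ldots v_n$: for $i\leq j$ both sides produce the $(n+2)$-path $v_0\ldots v_{i-1} v_i v_i v_{i+1}\ldots v_{j-1} v_j v_j v_{j+1}\ldots v_n$, since the $i$-th entry of $s_j(v_0\ldots v_n)$ remains $v_i$ and the $(i+1)$-th entry of $s_i(v_0\ldots v_n)$ (which is $v_i$) is what $s_{j+1}$ would later duplicate at position $j+1$.

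The main obstacle, as signaled above, is precisely that $s_j$ is not a weighted co-face map in the sense of Definition~\ref{def-2.31}; the weight depends on the input path. This prevents a clean reduction to Theorem~\ref{pr-2.3.2} and forces careful bookkeeping of indices as the two positions $i$ and $j$ straddle each other in cases~(ii) and (iii). However, once the computation is performed on a single elementary path, the linear-extension argument used throughout Subsection~\ref{ss2.2} completes the proof without further difficulty.
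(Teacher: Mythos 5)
The paper offers no proof of Proposition~\ref{pr-2.5.1}: it is stated with an immediate \qed\ as a re-statement of the classical simplicial identities from Curtis, translated through (\ref{eq-2.6.1})--(\ref{eq-2.6.5}). Your plan is therefore necessarily different in kind, and most of it is sound. Deriving the first identity from Theorem~\ref{pr-2.3.2}(i) with $f=g=1$ is correct and clean, and your observation that $s_j$ is \emph{not} a weighted co-face map $d_j^f$ for any fixed $f$ (the weight $\chi_{v_j}$ depends on the input path) is exactly the right reason why (ii) and (iii) cannot be reduced to Theorem~\ref{pr-2.3.2}(ii),(iii) and must be checked on elementary paths. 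Your computations for the cases $i<j$ and $i>j+1$ of the second identity and for the third identity are correct, including the minus sign in $-s_j\partial^1_{i-1}$ coming from $(-1)^i$ versus $(-1)^{i-1}$.

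The genuine gap is the middle case of the second identity, precisely where you write that the composition collapses ``to the identity up to sign.'' Carry the sign through: by (\ref{eq-2.6.1}) and (\ref{eq-2.6.2}),
\begin{eqnarray*}
\partial^1_j s_j (v_0v_1\ldots v_n) = (-1)^j\, v_0\ldots v_{j-1}v_jv_{j+1}\ldots v_n = (-1)^j\,(v_0v_1\ldots v_n),
\end{eqnarray*}
and similarly $\partial^1_{j+1}s_j=(-1)^{j+1}\,{\rm id}$. So the two middle cases give $(-1)^j\,{\rm id}$ and $(-1)^{j+1}\,{\rm id}$, which are not equal to ${\rm id}$ (nor to each other) under the paper's signed convention for $\partial^1_i$. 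Your hedge ``up to sign'' conceals exactly the point at which the claimed equality $\partial^1_i s_j={\rm id}$ fails; as written, your argument does not establish the statement, and in fact no argument can, because the statement holds only for the unsigned face maps $(-1)^i\partial^1_i$ of (\ref{eq-2.6.1}). To complete the proof you must either record the factor $(-1)^i$ explicitly in the middle case or restate that case for the unsigned maps; the remaining cases of your verification then go through as you describe.
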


\smallskip

\section{Simplicial-Like Identities for Regular Paths on Discrete Sets}

Let $V$  be a discrete set.  
 Let $n\geq 0$.  An elementary $n$-path $v_0\ldots v_n$ on $V$ is called {\it regular} if $v_{i-1}\neq v_i$  for all $1\leq i\leq n$   and is called {\it irregular} otherwise  (cf.  \cite[Definition~2.7]{lin2}).  For each $n\geq 0$,  let $I_n(V)$ be the subspace of $\Lambda_n(V)$ spanned by all the  irregular elementary $n$-paths on $V$.  We  have a graded subspace 
 \begin{eqnarray*}
 I_*(V)=\bigoplus_{n=0}^\infty I_n(V)
 \end{eqnarray*}
 of $\Lambda_*(V)$. 
  Consider the quotient space $\mathcal{R}_n(V)=
\Lambda_n(V)/I_n(V)$. Then $\mathcal{R}_n(V)$  is the vector space spanned by all the regular elementary $n$-paths on $V$  (cf. \cite[Definition~2.8]{lin2}).  An element in $\mathcal{R}_n(V)$  is called a {\it  regular $n$-path}  on $V$.   We  take the direct sum
\begin{eqnarray*}
\mathcal{R}_*(V)=\bigoplus_{n= 0}^\infty \mathcal{R}_n(V). 
\end{eqnarray*}

\smallskip

\subsection{The Weighted Face Maps and the Weighted Co-Face Maps  for Regular Paths}

 Let $f$  be a real function on $V$.    In  this subsection,   we  define the  $f$-weighted face maps and the $f$-weighted co-face maps  for regular paths on $V$ and prove some simplicial-like identities. 
 
 \smallskip
 
 \begin{definition}\label{def-3.a1}
   For each  $0\leq i\leq n$,   it follows from the argument in \cite[Subsection~2.3, Regular paths]{lin2} that  if we modulo the terms in $I_{n-1}(V)$ of the image  of $\partial_{i}^f: \Lambda_{n}(V)\longrightarrow\Lambda_{n-1}(V)$  \footnote{by saying "modulo the terms in $I_{n-1}(V)$ of the image  of $\partial_{i}^f$",  it means  that  we  take the canonical  projection  from the image  of $\partial_{i}^f$  in $\Lambda_{n-1}(V)$   to the orthogonal complement  of $I_{n-1}(V)$ in $\Lambda_{n-1}(V)$ with respect to the inner product  (\ref{eq-inn}).  },  then the $f$-weighted face  map $\partial^f_i$ from $\Lambda_n(V)$ to $\Lambda_{n-1}(V)$   induces a   linear map  
 \begin{eqnarray*}
 \tilde \partial^f_i:=\partial^f_i/I_*(V): ~~~\mathcal{R}_n(V)\longrightarrow \mathcal{R}_{n-1}(V).  
 \end{eqnarray*}
 We  call  $\tilde \partial^f_i$  the {\it $i$-th $f$-weighted regular face map}.  \end{definition}

 We give the explicit expression for the $i$-th $f$-weighted regular face map $\tilde \partial^f_i$ defined in Definition~\ref{def-3.a1}.    Let $v_0v_1\ldots v_n$  be an arbitrary  regular 
  elementary $n$-path  on $V$.   For any $u,v\in V$,  we let
\begin{eqnarray*}
\epsilon(u,v)=1-\delta(u,v). 
\end{eqnarray*}
  With the help of  the first formula in Proposition~\ref{le-2.3.99},   we  have 
  \begin{eqnarray}\label{eq-regular-1}
  \tilde \partial^f_i (v_0v_1\ldots v_n)=\begin{cases}
     f(v_0) v_1  \ldots v_n, &i=0,\\
  (-1)^i \epsilon(v_{i-1},v_{i+1}) f(v_i) v_0\ldots \widehat{v_i} \ldots v_n, &1\leq i\leq n-1,\\
      (-1)^n f(v_n) v_0 \ldots v_{n-1}, &i=n.
       \end{cases}  
  \end{eqnarray}
For convenience,  for all $0\leq i\leq n$  we write (\ref{eq-regular-1})  as 
\begin{eqnarray*}
  \tilde \partial^f_i (v_0v_1\ldots v_n)=(-1)^i \epsilon(v_{i-1},v_{i+1}) f(v_i) v_0\ldots \widehat{v_i} \ldots v_n 
\end{eqnarray*}
for short,  by an abuse of the following notations 
\begin{eqnarray*}
\epsilon(v_{-1},v_1)=\epsilon(v_{n-1},v_{n+1})=1.   
\end{eqnarray*}

\begin{definition}\label{def-3.a2}
For any   $0\leq i\leq n+1$,  if we modulo the terms in $I_*(V)$ of the image,   then  the $f$-weighted co-face  map $d^f_i$ from $\Lambda_n(V)$ to $\Lambda_{n+1}(V)$   induces a   linear map  
 \begin{eqnarray*}
\tilde d^f_i:=d^f_i/I_*(V): ~~~\mathcal{R}_n(V)\longrightarrow \mathcal{R}_{n+1}(V).  
 \end{eqnarray*}
 We  call  $\tilde d^f_i$  the {\it $i$-th $f$-weighted regular co-face map}.  
 \end{definition}
 \begin{remark}
 Note that the usual degeneracy $s_i$  given by (\ref{eq-2.6.2})  induces an identically-zero map  $s_i/I_*(V)=0$ from $\mathcal{R}_n(V)$ to $\mathcal{R}_{n+1}(V)$. 
 \end{remark}
 
 We give the explicit expression for the $i$-th $f$-weighted regular co-face map $\tilde   d^f_i$  defined in Definition~\ref{def-3.a2}.     Let $v_0v_1\ldots v_n$  be an arbitrary  regular 
  elementary $n$-path  on $V$.    With the help of the second   formula in Proposition~\ref{le-2.3.99},  we  have 
 \begin{eqnarray}\label{eq-regular-2}
\tilde d^f_i (v_0v_1\ldots v_n)=\begin{cases}
\sum_{v\in V\setminus\{v_0\}}  f(v)  v v_0\ldots v_n, &i=0,\\
\sum_{v\in V\setminus \{v_{j-1},v_j\}} (-1)^i   f(v) v_0\ldots  v_{i-1} v v_i\ldots v_n, 
&0\leq i\leq n,\\
\sum_{v\in V\setminus\{ v_n\}}  (-1)^{n+1} f(v)  v_0\ldots v_n v,  & i=n+1. 
\end{cases}
\end{eqnarray}
For  convenience,  for all $0\leq i\leq n+1$  we  write (\ref{eq-regular-2})  as  
\begin{eqnarray*}
\tilde d^f_i (v_0v_1\ldots v_n)=\sum_{v\neq v_{i-1},v_i} (-1)^i   f(v) v_0\ldots  v_{i-1} v v_i\ldots v_n
\end{eqnarray*}
for short, by an abuse of the following notations
\begin{eqnarray*}
v\neq v_{-1}, v_0\Longleftrightarrow  v\neq v_0, ~~~~~~~~~~~~  
v\neq v_{n},  v_{n+1}\Longleftrightarrow v\neq v_n. 
\end{eqnarray*}

It follows from Theorem~\ref{pr-2.3.2},  Definition~\ref{def-3.a1}, (\ref{eq-regular-1}),    Definition~\ref{def-3.a2}  and (\ref{eq-regular-2})  that the regular face maps and regular co-face maps  also  satisfy some   simplicial-like identities partially:  
\begin{theorem}[Main Result II: The simplicial-like identities for regular paths  on discrete sets]
\label{pr-3.1.1}
Let  $f$  and $g$ be  two real functions on $V$.   Then for any $n\geq 0$,  we  have 
\begin{enumerate}[(i).]
\item
$\tilde\partial_i^f\circ \tilde\partial_j^g=-\tilde\partial_{j-1}^g\circ \tilde\partial_i^f$ for any $i\leq j-2$; 

\item
$
\tilde\partial_i ^f\circ \tilde d_j^g= \begin{cases}
- \tilde d_{j-1}^g \circ\tilde \partial_i^f,  & i\leq j-2,\\
\langle f,g\rangle^{V\setminus\{v_{j-1}, v_j\}}  {\rm~ id},   & i=j, \\
- \tilde d_j^g \circ \tilde \partial_{i-1}^f,  &  i\geq j+2; 
\end{cases}
$

\item
$\tilde d_i^f \circ  \tilde d_j^g= - \tilde d_{j+1}^g \circ  \tilde d_i^f$  for $i\leq j-1$.    
\end{enumerate}
\end{theorem}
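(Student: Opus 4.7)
The plan is to verify each of the three identities by direct computation on a regular elementary $n$-path $v_0v_1\ldots v_n$, using the explicit formulas (\ref{eq-regular-1}) and (\ref{eq-regular-2}). The hypotheses on the indices ($i\leq j-2$ in (i), the gap condition in (ii), and $i\leq j-1$ in (iii)) are precisely the conditions ensuring that the two successive operations act on positions separated by at least one unaffected vertex of the original path, so that the $\epsilon$-factors and the excluded-vertex sets on the two sides of the identity refer to matching neighborhoods. In spirit, the identities descend from Theorem~\ref{pr-2.3.2}, but since $\partial_i^f$ does not preserve $I_*(V)$ in general, the passage to the quotient $\mathcal{R}_*(V)$ does not tautologically carry simplicial-like identities, and the verification must be carried out with the regularity corrections in place.

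For (i), I would first apply $\tilde\partial_j^g$ to obtain $(-1)^j \epsilon(v_{j-1},v_{j+1})g(v_j) v_0\ldots\widehat{v_j}\ldots v_n$. Re-indexing this as $w_0\ldots w_{n-1}$ with $w_k=v_k$ for $k<j$ and $w_k=v_{k+1}$ for $k\geq j$, the subsequent $\tilde\partial_i^f$ with $i\leq j-2$ contributes the $\epsilon$-factor $\epsilon(w_{i-1},w_{i+1})=\epsilon(v_{i-1},v_{i+1})$, since $i+1\leq j-1<j$. The symmetric computation for $-\tilde\partial_{j-1}^g\circ\tilde\partial_i^f$ yields the same combined coefficient $(-1)^{i+j}\epsilon(v_{i-1},v_{i+1})\epsilon(v_{j-1},v_{j+1})f(v_i)g(v_j)$ on $v_0\ldots\widehat{v_i}\ldots\widehat{v_j}\ldots v_n$, once the sign flip $(-1)^{j-1}=-(-1)^j$ is absorbed.

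For (ii), the cases $i\leq j-2$ and $i\geq j+2$ proceed by the same template. The crucial case is $i=j$: here $\tilde d_j^g(v_0\ldots v_n)=\sum_{v\in V\setminus\{v_{j-1},v_j\}}(-1)^j g(v) v_0\ldots v_{j-1}v v_j\ldots v_n$, and applying $\tilde\partial_j^f$ deletes the inserted $v$ with sign $(-1)^j$, weight $f(v)$, and $\epsilon$-factor $\epsilon(v_{j-1},v_j)=1$, the last equality using the regularity of the original path. Summing $f(v)g(v)$ over $v\in V\setminus\{v_{j-1},v_j\}$ then yields exactly $\langle f,g\rangle^{V\setminus\{v_{j-1},v_j\}}\cdot v_0\ldots v_n$, as required.

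Part (iii) follows by the same template with two successive insertions. For $i\leq j-1$, matching the inserted vertices in the two compositions produces the same $(n+2)$-path on both sides, with the restrictions $v\neq v_{i-1},v_i$ and $v\neq v_{j-1},v_j$ on the two inserted vertices matching on the two sides after re-indexing. The main obstacle across all three parts is the bookkeeping of how the $\epsilon$-factors and the excluded-vertex sets transform under the index shifts $j\mapsto j-1$ or $j\mapsto j+1$; the stated gap hypotheses are exactly what guarantees that these data agree on the two sides, and the interplay between the regularity of the original path and the $\epsilon$-factor $\epsilon(v_{j-1},v_j)=1$ is what makes case (ii) with $i=j$ produce the restricted inner product $\langle f,g\rangle^{V\setminus\{v_{j-1},v_j\}}$ rather than the full one.
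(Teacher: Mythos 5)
Your proposal is correct and follows essentially the same route as the paper's proof: a direct case-by-case computation on a regular elementary $n$-path using the explicit formulas (\ref{eq-regular-1}) and (\ref{eq-regular-2}), tracking how the $\epsilon$-factors and excluded-vertex sets transform under the index shifts, with the case $i=j$ of (ii) reducing to $\epsilon(v_{j-1},v_j)=1$ by regularity to produce the restricted inner product. Your opening observation that the identities cannot simply be pushed down from Theorem~\ref{pr-2.3.2} through the quotient is also accurate and is precisely why the gap hypotheses appear.
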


\begin{proof}
Let $v_0v_1\ldots v_n$  be a regular elementary $n$-path on $V$.  By the definition of the regular paths on $V$,  for any $0\leq i\leq n$  we have 
\begin{eqnarray}\label{eq-3.basic}
\epsilon(v_i,v_{i+1})=\epsilon(v_i,v_{i-1})=1. 
\end{eqnarray}
We prove (i), (ii), and  (iii)  separately.

(i). Suppose  $i<j$.  By a straight-forward calculation,   we  have 
\begin{eqnarray*}
&&\tilde\partial^f_i\circ \tilde\partial^g_j(v_0v_1\ldots v_n)\\
&=&\begin{cases}
(-1)^{i+j}  f(v_i) g(v_j) \epsilon(v_{j-1}, v_{j+1}) \epsilon(v_{i-1}, v_{i+1}) v_0\ldots \widehat{v_i}\ldots \widehat{v_j} \ldots v_n, & i\leq j-2, \\
-f(v_{j-1}) g(v_j) \epsilon(v_{j-1}, v_{j+1}) \epsilon(v_{j-2}, v_{j+1}) v_0\ldots \widehat{v_{j-1}} \widehat{v_j} \ldots v_n,  &i=j-1. 
\end{cases}
\end{eqnarray*}
On the other hand,  
\begin{eqnarray*}
&&\tilde\partial^g_{j-1}\circ \tilde\partial^f_i(v_0v_1\ldots v_n)\\
&=&\begin{cases}
(-1)^{i+j-1}  f(v_i) g(v_j) \epsilon(v_{j-1}, v_{j+1}) \epsilon(v_{i-1}, v_{i+1}) v_0\ldots \widehat{v_i}\ldots \widehat{v_j} \ldots v_n, & i\leq j-2, \\
f(v_{j-1}) g(v_j) \epsilon(v_{j-2}, v_{j+1}) \epsilon(v_{j-2}, v_{j}) v_0\ldots \widehat{v_{j-1}} \widehat{v_j} \ldots v_n,  &i=j-1. 
\end{cases}
\end{eqnarray*}
Therefore,  we obtain (i)  for $i\leq j-2$.

    (ii).    By a straight-forward calculation,   for any possible $i$ and $j$  we have 
\begin{eqnarray*}
&&\tilde \partial_i^f \circ \tilde d_j^g(v_0v_1\ldots v_n)\\
&=&\tilde \partial_i^f\Big(\sum_{v\neq v_{j-1},v_j} (-1)^j   g(v) v_0\ldots  v_{j-1} v v_j\ldots v_n\Big)\\
&=&\begin{cases}
\sum_{v\neq v_{j-1},v_j} (-1)^{j+i}  g (v) f(v_i)\epsilon(v_{i-1},v_{i+1})  v_0\ldots  \widehat{v_i}  \ldots  v_{j-1} v v_j\ldots v_n,  &i\leq j-2,\\
-\sum_{v\neq v_{j-1},v_j}   g (v) f(v_{j-1})\epsilon(v_{j-2},v)  v_0\ldots  \widehat{v_{j-1}}  v v_j\ldots v_n,  &i=j-1,\\
\sum_{v\neq v_{j-1},v_j}    g (v) f(v)   v_0 \ldots v_n,  &i=j,\\
-\sum_{v\neq v_{j-1},v_j}   g(v) f(v_j) \epsilon(v,v_{j+1}) v_0\ldots v_{j-1} v \widehat{v_j} \ldots v_n, 
&i=j+1,\\
\sum_{v\neq v_{j-1},v_j}  (-1)^{j+i} g(v) f(v_{i-1})\epsilon(v_{i-2},v_i) v_0\ldots v_{j-1} v v_j\ldots \widehat{v_{i-1}}\ldots v_n,  &i\geq j+2. 
\end{cases}
\end{eqnarray*}
Therefore, we obtain (ii) for the case $i=j$.  Moreover,  for $i<j$  we  have 
\begin{eqnarray*}
&&\tilde d_{j-1}^g \circ\tilde \partial_i^f(v_0v_1\ldots v_n)\\
&=&
\tilde d_{j-1}^g \Big((-1)^i \epsilon(v_{i-1}, v_{i+1}) f(v_i) v_0\ldots \widehat{v_i} \ldots v_n\Big)\\
&=&\begin{cases}
\sum_{v\neq v_{j-1}, v_j}(-1)^{i+j-1} \epsilon(v_{i-1}, v_{i+1}) f(v_i) g(v)  v_0\ldots \widehat{v_i}\ldots v_{j-1} v v_j\ldots v_n, &j\geq i+2,\\
\sum_{v\neq v_i, v_{i+1}} \epsilon(v_{i-1}, v_{i+1})  f(v_i) g(v) v_0\ldots \widehat{v_i} v v_{i+1} \ldots v_n, & j=i+1.  
\end{cases}
\end{eqnarray*}
Therefore,   we obtain (ii)  for the case  $i\leq j-2$.  
Furthermore,  for $i>j$ we   have 
\begin{eqnarray*}
&& \tilde d_j^g \circ \tilde \partial_{i-1}^f(v_0v_1\ldots v_n)\\
&=&(-1)^{i-1} \epsilon(v_{i-2},v_i) f(v_{i-1}) \tilde d^g_j (v_0\ldots \widehat{v_{i-1} }\ldots v_n)\\
&=&
\begin{cases}
 \sum_{v\neq  v_{j-1}, v_j }(-1)^{i+j-1} \epsilon(v_{i-2}, v_i) 
f(v_{i-1})  g(v) v_0\ldots v_{j-1} v v_j \ldots \widehat {v_{i-1}} \ldots v_n,  & j\leq i-2,\\
\sum_{v\neq v_{i-2},  v_i}  \epsilon(v_{i-2}, v_i) f(v_{i-1}) g(v) v_0\ldots v_{i-2} v \widehat{v_{i-1}} \ldots v_n, & j=i-1.  
\end{cases}
\end{eqnarray*}
Therefore,  we obtain (ii)  for the case  $i\geq j+2$.   Summarizing all the above,  we obtain (ii).

(iii). Suppose $i\leq j$.  By a straight-forward calculation,  we have 
\begin{eqnarray*}
&&\tilde d_i^f \circ \tilde d_j^g (v_0v_1\ldots v_n)\\
&=&\begin{cases}
\sum_{v\neq v_{j-1}, v_j}  \sum_{u\neq  v_{i-1}, v_i} 
(-1)^{i+j} f(u) g(v) v_0\ldots v_{i-1} u v_i\ldots v_{j-1} v v_j\ldots v_n,  &i\leq j-1,\\
\sum_{v\neq v_{j-1}, v_j}  \sum_{u\neq  v_{i-1}, v} 
  f(u) g(v) v_0\ldots v_{j-1} u   v v_j\ldots v_n,  &i=j. 
\end{cases}
\end{eqnarray*}
On the other hand, 
\begin{eqnarray*}
&&\tilde d_{j+1}^g \circ \tilde d_i^f (v_0v_1\ldots v_n)\\
&=&\begin{cases}
\sum_{u\neq  v_{i-1}, v_i} \sum_{v\neq v_{j-1}, v_j}  
(-1)^{i+j+1} f(u) g(v) v_0\ldots v_{i-1} u v_i\ldots v_{j-1} v v_j\ldots v_n,  &i\leq j-1,\\
-\sum_{u\neq v_{i-1}, v_i}  \sum_{v\neq  u, v_i} 
  f(u) g(v) v_0\ldots v_{i-1} u   v v_i\ldots v_n,  &i=j. 
\end{cases}
\end{eqnarray*}
Therefore,  we obtain (iii)  for $i\leq j-1$.  
\end{proof}

 \begin{remark}
The simplicial-like identities for the regular paths given in Theorem~\ref{pr-3.1.1}  are slightly different from the simplicial-like identities for the  paths given in Theorem~\ref{pr-2.3.2}.  The difference  is that   generally  in Theorem~\ref{pr-3.1.1},  the first simplicial identity does  not hold  in the case  $i=j-1$,   the second simplicial identity does  not hold  in the cases  $i=j-1$ and $i=j+1$,  and the third simplicial identity does  not hold  in the case  $i=j$. 
\end{remark}

\smallskip

\subsection{The Weighted Boundary Operators and The Weighted Co-Boundary Operators for Regular Paths}\label{ss-3.2}

 Let $f$  be a real function on $V$.  Let $n\geq 0$.   In  this subsection,   we  investigate  the  $f$-weighted boundary maps and the $f$-weighted co-boundary maps  for regular paths on $V$.    
 
 \smallskip

 \begin{definition}\label{def-3.15}
  For any $v\in V$,  we  use the notation  
 \begin{eqnarray*}
 \tilde\partial^f:= \sum_{i=0}^n \tilde\partial_i^f. 
  \end{eqnarray*}
 Then  we  have a graded linear map  
 \begin{eqnarray*}
 \tilde \partial^f: ~~~\mathcal{R}_n(V)\longrightarrow\mathcal{R}_{n-1}(V), ~~~~~~  n\geq 0. 
 \end{eqnarray*}
 We call $\tilde \partial^f$   the {\it $f$-weighted boundary map for regular paths}.  
 \end{definition}

  \begin{definition}\label{def-3.16}
  For any $v\in V$,  we  use the notation
 \begin{eqnarray*}
 \tilde d^f:= \sum_{i=0}^{n+1} \tilde d_i^f. 
 \end{eqnarray*}
 Then  we  have  a  graded linear map  
 \begin{eqnarray*}
 \tilde d^f: ~~~\mathcal{R}_n(V)\longrightarrow\mathcal{R}_{n+1}(V), ~~~~~~  n\geq 0. 
 \end{eqnarray*}
 We call   $\tilde d ^f$  the {\it $f$-weighted co-boundary map for regular paths}.  
 \end{definition}


\begin{definition}\label{def-3.17}
For any $v\in V$,   as a  particular case of Definition~\ref{def-3.15}  and  Definition~\ref{def-3.16},  we take $f$ to be the characteristic function $\chi_v$.   We define the {\it reduced partial derivative} with respect to $v$  as 
 \begin{eqnarray*}
 \frac{\tilde\partial}{\partial v}:=\tilde\partial^{\chi_v}=\sum_{i=0}^n \tilde\partial_i^{\chi_v}   
 \end{eqnarray*}
and define  the {\it reduced partial differentiation} with respect to $v$   as 
 \begin{eqnarray*}
 \tilde  d v:=\tilde  d ^{\chi_v}=\sum_{i=0}^n \tilde d_i^{\chi_v}.   
 \end{eqnarray*}
 \end{definition}
 
We give the explicit expressions of Definition~\ref{def-3.17}.   By Definition~\ref{def-3.17},   we  have graded linear maps 
 \begin{eqnarray*}
 \frac{\tilde\partial}{\partial v}:~~~\mathcal{R}_n(V)\longrightarrow \mathcal{R}_{n-1}(V), ~~~~~~ n\geq 0 
 \end{eqnarray*}
 and 
 \begin{eqnarray*}
\tilde dv:~~~\mathcal{R}_n(V)\longrightarrow \mathcal{R}_{n+1}(V), ~~~~~~ n\geq 0.  
 \end{eqnarray*}  
 For any regular  elementary $n$-path $v_0v_1\ldots v_n$  on $V$,  we have 
 \begin{eqnarray*}
  \frac{\tilde\partial}{\partial v}(v_0v_1\ldots v_n)=\sum_{i=0}^n(-1)^i \delta(v,v_i) \epsilon(v_{i-1}, v_{i+1})  v_0\ldots  \widehat{v_i} \ldots v_n 
 \end{eqnarray*}
 and 
 \begin{eqnarray*}
 \tilde d v (v_0v_1\ldots v_n)=\sum_{i=0}^{n+1} (-1)^i  \epsilon(v,v_{i-1}) \epsilon(v,v_i) v_0\ldots  v_{i-1} v v_i\ldots v_n. 
 \end{eqnarray*}
    Here we  abuse the notation   by  writing 
    \begin{eqnarray*}
    \epsilon(v,v_{-1})=1,~~~~~~  \epsilon (v, v_{n+1})=1. 
    \end{eqnarray*}

\begin{definition}
We  define 
\begin{enumerate}[(i). ]
\item
the {\it anti-commutator} $ \Big(\frac{\tilde\partial}{\partial v}, \frac{\tilde\partial}{\partial u}  \Big)$ of $\frac{\tilde\partial}{\partial v}$  and $\frac{\tilde\partial}{\partial u}$  by       \begin{eqnarray*}
    \Big(\frac{\tilde\partial}{\partial v}, \frac{\tilde\partial}{\partial u}  \Big): =\frac{\tilde\partial}{\partial v}\circ \frac{\tilde\partial}{\partial u} + \frac{\tilde\partial }{\partial u}\circ\frac{\tilde\partial}{\partial v};   
        \end{eqnarray*}
   \item
   the {\it anti-commutator} $(\tilde d v, \tilde d u   )$ of $\tilde d v$ and  $\tilde d u$  by          \begin{eqnarray*}
   (\tilde d v, \tilde d u   ): =\tilde d v\circ \tilde d u+\tilde d u\circ \tilde d v.   
        \end{eqnarray*} 
        \end{enumerate} 
        \end{definition}

  We  have the next lemma.  
    \begin{lemma}\label{le-3.2.1}
Let $u,v\in V$.   Let  $n\geq 0$.   Let  $v_0v_1\ldots v_n$  be a regular elementary $n$-path on $V$.   Then 
    \begin{eqnarray}\label{eq-3.2.a}
     \Big(\frac{\tilde\partial}{\partial v}, \frac{\tilde\partial}{\partial u}  \Big)(v_0v_1\ldots v_n)&=&\sum_{i=0}^n   \Big(\delta(v,v_i) \delta(u, v_{i-1})   -\delta(u,v_i)\delta(v,v_{i-1})   \Big)\nonumber\\
     &&~~~~~~\epsilon(v_{i-1}, v_{i+1}) \epsilon(v_{i-2}, v_{i+1})v_0\ldots \widehat{ v_{i-1}}\widehat{ v_i} \ldots v_n\nonumber \\
  && + \Big( \delta(u,v_i)\delta(v,v_{i+1})
 -\delta(v,v_i)\delta(u, v_{i+1})
    \Big)\nonumber\\
  &&  ~~~~~~\epsilon(v_{i-1}, v_{i+1}) \epsilon(v_{i-1}, v_{i+2}) v_0\ldots \widehat{v_i} \widehat{v_{i+1}} \ldots v_n    
     \end{eqnarray}
     and 
     \begin{eqnarray}\label{eq-3.2.b}
(\tilde d v, \tilde d u   )(v_0v_1\ldots v_n)
    &=&\sum_{i=0}^{n+1}\Big(  
     \epsilon (u,v_{i-1})   - 
    \epsilon(v,v_i)   \Big)\epsilon(u,v_i)\epsilon(v,v_{i-1})\epsilon(v,u)\nonumber\\
  &&  ~~~~~~v_0\ldots v_{i-1} vu v_i\ldots v_n\nonumber\\
   &&+ \Big(\epsilon(v,v_{i-1})-\epsilon(u,v_i)\Big)\epsilon(u,v_{i-1})\epsilon(v,v_i)\epsilon(v,u)\nonumber\\
  && ~~~~~~v_0\ldots v_{i-1} uv v_i\ldots v_n. 
   \end{eqnarray}
       \end{lemma}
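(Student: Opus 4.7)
The plan is to compute both $\frac{\tilde\partial}{\partial v}\circ\frac{\tilde\partial}{\partial u}(v_0\ldots v_n)$ and $\frac{\tilde\partial}{\partial u}\circ\frac{\tilde\partial}{\partial v}(v_0\ldots v_n)$ directly from the explicit formulas for $\frac{\tilde\partial}{\partial v}$ and $\tilde dv$ on regular elementary paths just displayed, and then add them. Expanding each weighted boundary operator as the sum of its face maps, each composition becomes a double sum over face-index pairs, which one reindexes as a sum over ordered pairs of positions in the original path from which vertices are successively removed. The same will be done with $\tilde dv, \tilde du$ for (\ref{eq-3.2.b}).

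The key structural observation is that by Theorem~\ref{pr-3.1.1}(i) the face-level anti-commutator $\tilde\partial_i^{\chi_v}\tilde\partial_j^{\chi_u}+\tilde\partial_{j-1}^{\chi_u}\tilde\partial_i^{\chi_v}$ already vanishes for $i\leq j-2$, with a symmetric version covering $i\geq j+2$ after swapping $u\leftrightarrow v$. Consequently, in the full anti-commutator all contributions corresponding to removing two \emph{non}-adjacent positions of the original path cancel pairwise, leaving only contributions from adjacent pairs of positions $\{v_{i-1},v_i\}$ or $\{v_i,v_{i+1}\}$. These surviving contributions are read off directly from the face-map formulas: removing $v_j$ first contributes $(-1)^j$ together with the factor $\epsilon(v_{j-1},v_{j+1})$, and then removing the neighbor from the reduced $(n-1)$-path contributes a second sign and a second regularity factor $\epsilon(v_{j-2},v_{j+1})$ (for the left neighbor $v_{j-1}$) or $\epsilon(v_{j-1},v_{j+2})$ (for the right neighbor $v_{j+1}$). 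Pairing the two orderings of $u,v$ across the two compositions and relabelling by $i=j$ yields the two summands of (\ref{eq-3.2.a}); the relative minus signs in the $\delta$-coefficient combinations encode the opposite orderings of removal.

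The argument for (\ref{eq-3.2.b}) is strictly parallel, with Theorem~\ref{pr-3.1.1}(iii) replacing (i): the face-level identity $\tilde d_i^{\chi_v}\tilde d_j^{\chi_u}+\tilde d_{j+1}^{\chi_u}\tilde d_i^{\chi_v}=0$ for $i\leq j-1$ annihilates every ``different-slot'' contribution to the co-boundary anti-commutator, leaving only terms in which $u$ and $v$ are inserted into the same slot $i$, producing the strings $\ldots v_{i-1}vuv_i\ldots$ and $\ldots v_{i-1}uvv_i\ldots$. The four $\epsilon$-factors inside each summand of (\ref{eq-3.2.b}) record the regularity conditions after each successive insertion: e.g.\ after inserting $v$ one needs $\epsilon(v,v_{i-1})\epsilon(v,v_i)$, and after further inserting $u$ between $v_{i-1}$ and $v$ one needs $\epsilon(u,v_{i-1})\epsilon(u,v)$. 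The difference factors $\epsilon(u,v_{i-1})-\epsilon(v,v_i)$ and $\epsilon(v,v_{i-1})-\epsilon(u,v_i)$ then emerge when the contributions of both orderings are collected.

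The main obstacle is not conceptual but accounting: the $\epsilon$-regularity factors transform subtly under successive removals and insertions, since ``adjacent'' in the reduced or extended path typically means ``second-neighbour'' in the original, and the index shifts past each removed or inserted position must be tracked meticulously so that the cancellations provided by Theorem~\ref{pr-3.1.1} line up exactly with the residual adjacent contributions. One must also check that the boundary cases $i=0$ and $i=n$, where the conventions $\epsilon(v_{-1},v_1)=\epsilon(v_{n-1},v_{n+1})=1$ (and their analogues for $\tilde dv$) kick in, behave uniformly with the interior cases, so that no extra terms are introduced or lost at the endpoints.
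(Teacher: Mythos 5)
Your proposal is correct, and the verification of the surviving terms is exactly what the paper does; the one genuine difference is how you dispose of the non\-/surviving terms. The paper's proof writes out the full double sums for $\frac{\tilde\partial}{\partial v}\circ\frac{\tilde\partial}{\partial u}$, $\frac{\tilde\partial}{\partial u}\circ\frac{\tilde\partial}{\partial v}$, $\tilde dv\circ\tilde du$ and $\tilde du\circ\tilde dv$ (including all the far\-/apart index pairs) and then observes by inspection that those terms cancel upon addition, whereas you invoke Theorem~\ref{pr-3.1.1}(i) and (iii) with $f=\chi_v$, $g=\chi_u$ to cancel them a priori and only compute the adjacent\-/pair (resp.\ same\-/slot) contributions. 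This is legitimate --- Theorem~\ref{pr-3.1.1} is proved earlier and independently, and $\frac{\tilde\partial}{\partial v}=\sum_i\tilde\partial_i^{\chi_v}$, $\tilde dv=\sum_i\tilde d_i^{\chi_v}$ by Definition~\ref{def-3.17} --- and it buys a shorter computation at the cost of one bookkeeping check you should make explicit: the cancellation $\tilde\partial_i^{\chi_v}\circ\tilde\partial_j^{\chi_u}=-\tilde\partial_{j-1}^{\chi_u}\circ\tilde\partial_i^{\chi_v}$ for $i\leq j-2$ pairs the first\-/composition indices $\{i\leq j-2\}$ bijectively with the second\-/composition indices $\{a\geq b+1\}$, and the $u\leftrightarrow v$ symmetric version pairs the remaining far\-/apart indices the other way, so that in each composition exactly the two families $i=j-1$ and $i=j$ survive (your phrase ``$i\geq j+2$'' is slightly loose here, but the conclusion you draw --- only removals of adjacent original positions survive --- is the correct one; similarly for $\tilde d$ only $i=j$ and $i=j+1$ survive). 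Your identification of the residual coefficients, e.g.\ $\epsilon(v_{j-1},v_{j+1})\epsilon(v_{j-2},v_{j+1})$ for removing $v_j$ then $v_{j-1}$, and the four $\epsilon$-factors tracking regularity after successive insertions, matches the paper's formulas exactly, including the endpoint conventions.
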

    
    \begin{proof}
  Let $u,v\in V$,  $n\geq 0$,   and  $v_0v_1\ldots v_n$  be a regular elementary $n$-path on $V$.   It follows from a straight-forward calculation that   
    \begin{eqnarray}\label{eq-3.2.10}
    \frac{\tilde\partial}{\partial v}\circ \frac{\tilde\partial}{\partial u} (v_0v_1\ldots v_n)&=&
    \sum_{i=0}^n  (-1)^i  \delta(u,v_i) \epsilon (v_{i-1},v_{i+1})\Big(
    \sum_{j=0}^{i-2} (-1)^j \delta (v,v_j)  \epsilon (v_{j-1}, v_{j+1}) \nonumber\\
   && \text{~~~~~~~~~~~~~~~~~~~~~~~~~~~~~~~~~~~~~~~~~~~~}v_0\ldots \widehat{v_j} \ldots \widehat{v_i} \ldots v_n\nonumber\\
    &&+\sum_{j=i+1}^{n-1}  (-1)^j  \delta(v,v_{j+1})  \epsilon (v_j, v_{j+2})  v_0\ldots \widehat{v_i}\ldots \widehat{v_{j+1}} \ldots v_n\nonumber \\
    && + (-1)^{i-1}\delta(v,v_{i-1}) \epsilon(v_{i-2},v_{i+1}) v_0\ldots \widehat{v_{i-1}} \widehat{v_i} \ldots v_n\nonumber\\
    && +  (-1)^i \delta(v_i, v_{i+1})  \epsilon (v_{i-1}, v_{i+2})  v_0\ldots \widehat{v_i} \widehat{v_{i+1}}\ldots  v_n 
    \Big)
    \end{eqnarray}
    and 
    \begin{eqnarray}\label{eq-3.2.11}
    \frac{\tilde\partial }{\partial u}\circ\frac{\tilde\partial}{\partial v}(v_0v_1\ldots v_n)&=&
    \sum_{j=0}^n (-1)^j \delta(v,v_j) \epsilon(v_{j-1},v_{j+1}) \Big(
    \sum_{i=0}^{j-2} (-1)^i \delta(u,v_i) \epsilon(v_{i-1}, v_{i+1}) \nonumber\\
   && \text{~~~~~~~~~~~~~~~~~~~~~~~~~~~~~~~~~~~~~~~~~~~~}v_0\ldots \widehat{v_i} \ldots \widehat{v_j}\ldots v_n  \nonumber\\
    && +  \sum_{i=j+1}^{n-1}  (-1)^i \delta(u,v_{i+1})  \epsilon(v_i, v_{i+2}) v_0\ldots \widehat{v_j} \ldots \widehat{v_{i+1}} \ldots v_n  \nonumber\\
    && + (-1)^{j-1} \delta (u,v_{j-1}) \epsilon(v_{j-2},v_{j+1}) v_0\ldots \widehat{v_{j-1}}\widehat{v_j}\ldots v_n  \nonumber\\
    && + (-1)^j \delta(u, v_{j+1}) \epsilon (v_{j-1},v_{j+2})  v_0\ldots \widehat{v_j} \widehat{v_{j+1}}\ldots v_n
    \Big).  
    \end{eqnarray}
    Here we abuse the notations by writing 
    \begin{eqnarray*}
   & \delta(v,v_{-1})=\delta(v,v_{n+1}) =\delta(u,v_{-1}) =\delta(u,v_{n+1})=0, \\
   &\epsilon(v_{-1},v_2)= \epsilon(v_{n-2},v_{n+1})=1.
    \end{eqnarray*}
  Summing up (\ref{eq-3.2.10})  and (\ref{eq-3.2.11}),  it follows that   
      \begin{eqnarray*}
  \Big(\frac{\tilde\partial}{\partial v}, \frac{\tilde\partial}{\partial u}  \Big)(v_0v_1\ldots v_n)&=&
  \sum_{i=0}^n \Big( -\delta(u,v_i)\delta(v,v_{i-1}) \epsilon(v_{i-1}, v_{i+1}) \epsilon(v_{i-2}, v_{i+1}) v_0\ldots \widehat{ v_{i-1}}\widehat{ v_i} \ldots v_n \\
  && +   \delta(u,v_i)\delta(v,v_{i+1})\epsilon(v_{i-1},v_{i+1}) \epsilon(v_{i-1}, v_{i+2}) v_0\ldots \widehat{v_i} \widehat{v_{i+1}} \ldots v_n
   \Big)\\
   &&- \sum_{j=0}^n \Big( -\delta(v,v_j) \delta(u, v_{j-1}) \epsilon (v_{j-1}, v_{j+1}) \epsilon (v_{j-2}, v_{j+1}) v_0\ldots \widehat {v_{j-1}} \widehat{ v_j} \ldots  v_n  \\
   && +  \delta(v,v_j)\delta(u, v_{j+1})\epsilon(v_{j-1}, v_{j+1}) \epsilon(v_{j-1}, v_{j+2}) v_0\ldots \widehat{v_{j}} \widehat{v_{j+1}} \ldots v_n
   \Big)\\
   &=&\sum_{i=0}^n   \Big(\delta(v,v_i) \delta(u, v_{i-1})   -\delta(u,v_i)\delta(v,v_{i-1})   \Big)\epsilon(v_{i-1}, v_{i+1}) \epsilon(v_{i-2}, v_{i+1})\nonumber\\
   && \text{~~~~~~}v_0\ldots \widehat{ v_{i-1}}\widehat{ v_i} \ldots v_n\\
  && + \Big( \delta(u,v_i)\delta(v,v_{i+1})
 -\delta(v,v_i)\delta(u, v_{i+1})
    \Big)\epsilon(v_{i-1}, v_{i+1}) \epsilon(v_{i-1}, v_{i+2}) \nonumber\\
   && \text{~~~}v_0\ldots \widehat{v_i} \widehat{v_{i+1}} \ldots v_n. 
    \end{eqnarray*}  
    We obtain (\ref{eq-3.2.a}).

     On the other hand,  it also follows from a straight-forward calculation that 
         \begin{eqnarray}\label{eq-3.2.12}
    \tilde d v\circ \tilde d u (v_0v_1\ldots v_n)&=&\sum_{i=0}^{n+1} (-1)^i \epsilon (u,v_{i-1}) 
    \epsilon(u,v_i) \Big(
    \sum_{j=0}^{i-1} (-1)^j \epsilon (v,v_{j-1}) \epsilon (v,v_j) \nonumber\\
   && \text{~~~~~~~~~~~~~~~~~~~~~~~~~~~~~~~~~~~~~~~~}v_0\ldots v_{j-1} v v_j\ldots v_{i-1} u v_i\ldots v_n\nonumber\\
   && + \sum_{j=i+2}^{n+2} (-1)^j \epsilon (v,v_{j-2})  \epsilon (v,v_{j-1}) v_0\ldots v_{i-1}u v_i\ldots v_{j-2} v v_{j-1}\ldots  v_n \nonumber\\
  && +  (-1)^i \epsilon (v,v_{i-1})  \epsilon (v,u) v_0\ldots v_{i-1} v u v_i\ldots v_n \nonumber
  \\
  && + (-1)^{i+1} \epsilon (v,u) \epsilon(v,v_i) v_0\ldots v_{i-1}  u v v_i\ldots v_n
    \Big)
    \end{eqnarray}
      and 
      \begin{eqnarray}\label{eq-3.2.13}
    \tilde d u\circ \tilde d v (v_0v_1\ldots v_n)&=&\sum_{j=0}^{n+1} (-1)^j \epsilon (v,v_{j-1}) 
    \epsilon(v,v_j) \Big(
    \sum_{i=0}^{j-1} (-1)^i \epsilon (u,v_{i-1}) \epsilon (u,v_i)\nonumber\\
   && \text{~~~~~~~~~~~~~~~~~~~~~~~~~~~~~~~~~~~~~~~~} v_0\ldots v_{i-1} u v_i\ldots v_{j-1} v v_j\ldots v_n\nonumber \\
   && + \sum_{i=j+2}^{n+2} (-1)^i \epsilon (u,v_{i-2})  \epsilon (u,v_{i-1}) v_0\ldots v_{j-1}v v_j\ldots v_{i-2} u v_{i-1}\ldots  v_n \nonumber \\
  && +  (-1)^j \epsilon (u,v_{j-1})  \epsilon (v,u) v_0\ldots v_{j-1} u v v_j\ldots v_n 
  \nonumber \\
  && + (-1)^{j+1} \epsilon (v,u) \epsilon(u,v_j) v_0\ldots v_{j-1}  v u v_j\ldots v_n 
    \Big).  
    \end{eqnarray}
 Summing up  (\ref{eq-3.2.12})  and (\ref{eq-3.2.13}),   it follows that  
 \begin{eqnarray*}
(\tilde d v, \tilde d u   )(v_0v_1\ldots v_n)
&=&\sum_{i=0}^{n+1}\Big(\epsilon (u,v_{i-1}) 
    \epsilon(u,v_i) \epsilon (v,v_{i-1})  \epsilon (v,u) v_0\ldots v_{i-1} v u v_i\ldots v_n \\
    &&-\epsilon (u,v_{i-1}) 
    \epsilon(u,v_i)\epsilon (v,u) \epsilon(v,v_i) v_0\ldots v_{i-1}  u v v_i\ldots v_n \Big)\\
    && +\sum_{j=0}^{n+1}\Big(\epsilon (v,v_{j-1}) 
    \epsilon(v,v_j)\epsilon (u,v_{j-1})  \epsilon (v,u) v_0\ldots v_{j-1} u v v_j\ldots v_n 
  \\  
   && - \epsilon (v,v_{j-1}) 
    \epsilon(v,v_j) \epsilon (v,u) \epsilon(u,v_j) v_0\ldots v_{j-1}  v u v_j\ldots v_n \Big)\\
    &=&\sum_{i=0}^{n+1}\Big(  
     \epsilon (u,v_{i-1})   - 
    \epsilon(v,v_i)   \Big)\epsilon(u,v_i)\epsilon(v,v_{i-1})\epsilon(v,u)v_0\ldots v_{i-1} vu v_i\ldots v_n\\
   &&+ \Big(\epsilon(v,v_{i-1})-\epsilon(u,v_i)\Big)\epsilon(u,v_{i-1})\epsilon(v,v_i)\epsilon(v,u)v_0\ldots v_{i-1} uv v_i\ldots v_n. 
   \end{eqnarray*}
   We  obtain (\ref{eq-3.2.b}).  
\end{proof}
 
 Let $f$ and $g$  be two real functions on $V$  We  note that 
 \begin{eqnarray*}
 \tilde\partial^f= \sum_{v\in V}  f(v) \frac{\tilde\partial}{\partial v}, ~~~~~~ 
 \tilde d^f=\sum_{v\in V}  f(v) \tilde d v
 \end{eqnarray*}
 and the same identities hold for $g$ as well.  Moreover,  if we write  the anti-commutators as 
 \begin{eqnarray*}
 (\tilde \partial^f, \tilde \partial^g)=\tilde \partial^f\circ \tilde \partial^g+ \tilde \partial^g \circ \tilde \partial^f, ~~~~~~  (\tilde d^f,\tilde d^g)=\tilde d^f\circ \tilde d^g + \tilde d^g\circ \tilde d^f,
 \end{eqnarray*}
 then  we  have 
 \begin{eqnarray}\label{eq-3.2.aaa}
 (\tilde \partial^f, \tilde \partial^g)=\sum_{v,u\in V}f(v)g(u)    \Big(\frac{\tilde\partial}{\partial v}, \frac{\tilde\partial}{\partial u}  \Big)  
 \end{eqnarray}
 and 
 \begin{eqnarray}\label{eq-3.2.bbb}
 (\tilde d^f,\tilde d^g)=\sum_{v,u\in V}f(v)g(u)  (\tilde d v,\tilde d u). 
 \end{eqnarray}
 The next proposition follows from Lemma~\ref{le-3.2.1}. 
 
 \begin{proposition}\label{le-3.2.88}
 Let $f$ and $g$ be real functions on $V$.   Let  $n\geq 0$  and let  $v_0v_1\ldots v_n$  be a regular elementary $n$-path on $V$.   Then 
    \begin{eqnarray}\label{eq-3.2.aa}
     (\tilde\partial^f, \tilde\partial^g)(v_0v_1\ldots v_n)&=&\sum_{v,u\in V} f(v) g(u) \Big[\sum_{i=0}^n   \Big(\delta(v,v_i) \delta(u, v_{i-1})   -\delta(u,v_i)\delta(v,v_{i-1})   \Big)\nonumber\\
     &&~~~~~~\epsilon(v_{i-1}, v_{i+1}) \epsilon(v_{i-2}, v_{i+1})v_0\ldots \widehat{ v_{i-1}}\widehat{ v_i} \ldots v_n\nonumber \\
  && + \Big( \delta(u,v_i)\delta(v,v_{i+1})
 -\delta(v,v_i)\delta(u, v_{i+1})
    \Big)\nonumber\\
  &&  ~~~~~~\epsilon(v_{i-1}, v_{i+1}) \epsilon(v_{i-1}, v_{i+2}) v_0\ldots \widehat{v_i} \widehat{v_{i+1}} \ldots v_n \Big]   
     \end{eqnarray}
     and 
     \begin{eqnarray}\label{eq-3.2.bb}
(\tilde d ^f, \tilde d ^g  )(v_0v_1\ldots v_n)
    &=&\sum_{v,u\in V} f(v)g(u) \Big[\sum_{i=0}^{n+1}\Big(  
     \epsilon (u,v_{i-1})   - 
    \epsilon(v,v_i)   \Big)\epsilon(u,v_i)\epsilon(v,v_{i-1})\epsilon(v,u)\nonumber\\
  &&  ~~~~~~v_0\ldots v_{i-1} vu v_i\ldots v_n\nonumber\\
   &&+ \Big(\epsilon(v,v_{i-1})-\epsilon(u,v_i)\Big)\epsilon(u,v_{i-1})\epsilon(v,v_i)\epsilon(v,u)\nonumber\\
  && ~~~~~~v_0\ldots v_{i-1} uv v_i\ldots v_n \Big]. 
   \end{eqnarray}
 \end{proposition}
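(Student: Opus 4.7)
The plan is to observe that the proposition is a direct consequence of Lemma~\ref{le-3.2.1} together with the preparatory identities (\ref{eq-3.2.aaa}) and (\ref{eq-3.2.bbb}). The anti-commutators $(\tilde\partial^f,\tilde\partial^g)$ and $(\tilde d^f,\tilde d^g)$ have already been re-written as double sums weighted by $f(v)g(u)$ of the vertex-level anti-commutators $\bigl(\frac{\tilde\partial}{\partial v},\frac{\tilde\partial}{\partial u}\bigr)$ and $(\tilde d v,\tilde d u)$. So there is no fresh combinatorial content to uncover; what remains is to splice in the explicit vertex-level formulas and exchange the order of summation.

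Concretely, I would first unpack $\tilde\partial^f=\sum_{v\in V}f(v)\tfrac{\tilde\partial}{\partial v}$ and $\tilde\partial^g=\sum_{u\in V}g(u)\tfrac{\tilde\partial}{\partial u}$, expand both $\tilde\partial^f\circ\tilde\partial^g$ and $\tilde\partial^g\circ\tilde\partial^f$ as double sums over $v,u\in V$, and collect the bilinear coefficients $f(v)g(u)$ to obtain (\ref{eq-3.2.aaa}). Then I would apply the first identity in Lemma~\ref{le-3.2.1}, evaluated on the elementary regular $n$-path $v_0v_1\ldots v_n$, and put the result inside the outer $\sum_{v,u\in V}f(v)g(u)$. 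Since this outer sum commutes with the (finite) inner sum over $i$, the indices can be re-arranged to match the right-hand side of (\ref{eq-3.2.aa}) verbatim.

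The proof of (\ref{eq-3.2.bb}) runs in parallel: expand $\tilde d^f\circ\tilde d^g+\tilde d^g\circ\tilde d^f$ according to the definitions in Definition~\ref{def-3.17}, factor out $f(v)g(u)$ to get (\ref{eq-3.2.bbb}), and then substitute the second identity of Lemma~\ref{le-3.2.1}, again swapping the order of the sums. Linearity in the regular elementary $n$-path lets one drop the assumption that the argument is elementary and conclude the identity on all of $\mathcal{R}_n(V)$.

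The only mild obstacle is notational discipline: Lemma~\ref{le-3.2.1} uses the convention $\epsilon(v_{-1},v_1)=\epsilon(v_{n-1},v_{n+1})=1$ (and analogously for the $\delta$-symbols with out-of-range indices), and one must carry the same conventions inside the sum $\sum_{v,u\in V}f(v)g(u)$ so that the boundary terms ($i=0$, $i=n$, $i=n+1$) translate correctly. Once this convention is imported, no further computation beyond interchanging two finite sums is required, and Proposition~\ref{le-3.2.88} follows.
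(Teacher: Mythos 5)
Your proposal is correct and follows exactly the paper's own route: the paper likewise deduces (\ref{eq-3.2.aa}) from (\ref{eq-3.2.a}) together with (\ref{eq-3.2.aaa}), and (\ref{eq-3.2.bb}) from (\ref{eq-3.2.b}) together with (\ref{eq-3.2.bbb}). The only difference is that you spell out the interchange of finite sums and the boundary-index conventions, which the paper leaves implicit.
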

 
 \begin{proof}
 The expression (\ref{eq-3.2.aa}) follows from (\ref{eq-3.2.a}) and (\ref{eq-3.2.aaa}).  And the expression (\ref{eq-3.2.bb}) follows from (\ref{eq-3.2.b})  and (\ref{eq-3.2.bbb}).   
 \end{proof}
 
 \begin{remark}
 By Proposition~\ref{le-3.2.88},  the operators $\tilde\partial^f$ and $\tilde\partial^g$  as well  as the operators $\tilde d ^f$ and $\tilde d ^g$ on the regular paths  are not anti-commutative, in general.  The anti-commutative property  holds in Proposition~\ref{pr-2.5.5}  and does not hold  in Proposition~\ref{le-3.2.88}. 
 \end{remark}
 
 \bigskip

\section*{Acknowledgement} {The  author would like to express his  deep
gratitude to the referee for the careful reading of the manuscript.}

 \bigskip

 \bigskip
 
Shiquan Ren 

Address:   School of Mathematics and Statistics,  Henan University,  Kaifeng  475004,  China. 

E-mail:  srenmath@126.com

\end{document}